\newcommand{\blackboard}[1]{\mathbb{#1}}
\newcommand{\Q}{\blackboard{Q}}
\newcommand{\D}{\Delta}
\newcolumntype{H}{>{\setbox0=\hbox\bgroup}c<{\egroup}@{}}
\newtheorem{thm}{Theorem}[section]
\newtheorem{cor}[thm]{Corollary}
\newtheorem{conj}[thm]{Conjecture}
\theoremstyle{definition}
\newtheorem{defin}[thm]{Definition}
\begin{document}

\title[Unconditional Class Group Tabulation to $2^{40}$]{Unconditional
  Class Group Tabulation of Imaginary Quadratic Fields to $|\Delta| < 2^{40}$}

\author{A.~S.\ Mosunov}
\address{University of Waterloo \\
                200 University Ave W,
                Waterloo, Ontario \\ Canada N2L 3G1}
\email{amosunov@uwaterloo.ca}
\thanks{The first author's research was supported by ``Alberta Innovates Technology Futures'', Canada.}

\author{M.~J.\ Jacobson, Jr.}
\address{University of Calgary \\
                2500 University Drive NW,
                Calgary, Alberta \\ Canada T2N 1N4}
\email{jacobs@cpsc.ucalgary.ca}
\thanks{The second author's research is supported by NSERC of Canada.}

\subjclass[2010]{Primary 11R29; Secondary 11R11, 11Y40}
\keywords{binary quadratic form, class group, tabulation, out-of-core
  multiplication, Cohen-Lenstra heuristics}

\begin{abstract}
We present an improved algorithm for tabulating class groups of
imaginary quadratic fields of bounded discriminant.  Our method uses
classical class number formulas involving theta-series to compute the
group orders unconditionally for all $\Delta \not \equiv 1 \pmod{8}.$ The
group structure is resolved using the factorization of the group
order.  The $1 \bmod 8$ case was handled using the methods of
\cite{jacobson}, including the batch verification method based on the
Eichler-Selberg trace formula to remove dependence on the Extended
Riemann Hypothesis.  Our new method enabled us to extend the
previous bound of $|\Delta| < 2 \cdot 10^{11}$ to $2^{40}$.
Statistical data in support of a variety conjectures is presented,
along with new examples of class groups with exotic structures.
\end{abstract}

\maketitle

\section{Introduction} \label{sec:intro}

The class group of an imaginary quadratic field $\Q(\sqrt{\D})$ with
discriminant $\Delta,$ denoted by $Cl_\Delta,$ has been studied
extensively over the past two centuries.  Many things are known about
the class group. For example, if we know the \emph{class number}
$h(\Delta)$, which is defined as the size of $Cl_\Delta$, we can find
a non-trivial factor of $\Delta$. Also, from the prime factorization
of $\Delta$ we can determine the parity of $h(\Delta)$, as well as the
rank of the $2$-Sylow subgroup of $Cl_\Delta.$

However, the number of open questions about $Cl_\Delta$ most certainly
exceeds the number of answered. For example, computing the class
number is believed to be computationally difficult; it is known to be at
least as hard as integer factorization, and is currently harder.  The
heuristics of Cohen and Lenstra \cite{cohen2} allow us to make certain
predictions regarding divisibility properties of $h(\Delta)$ and the
structure of $Cl_\Delta,$ but most of these, especially with respect
to odd primes, remain unproved.  Another question of interest is to
provide tight bounds on $h(\Delta)$.  This has been answered by
Littlewood \cite{littlewood}, but the result is \emph{conditional};
that is, it depends on the \emph{Extended Riemann Hypothesis} (ERH).

Due to the lack of unconditional proof on such basic arithmetic
properties on $Cl_\Delta,$ it is of interest to provide numerical
evidence supporting the heuristics and conditional results.
Tabulating $Cl_\Delta$ for as many small discriminants as possible
provides such evidence.  The first major work on class group
tabulation is due to Buell, who in a series of papers culminating in
\cite{buell}, computed all $Cl_\Delta$ for negative $\Delta$
satisfying $|\Delta| < 2.2\cdot 10^9$. In his work, Buell gathered
statistics on Littlewood's bounds on $L(1, \chi_\Delta)$
\cite{littlewood}, Bach's bound on the size of the generators required
to produce $Cl_\Delta$ \cite{bach1}, and the Cohen-Lenstra heuristics
\cite{cohen2}. He also provided a table of first occurrences of
so-called ``exotic'' groups. These groups possess interesting group
structures, such as non-cyclic $p$-Sylow subgroups for odd primes $p$,
which according to the Cohen-Lenstra heuristics are quite rare.
Such groups are of interest, for example, in the context of class field
theory, as the towers of field extensions for them have interesting,
non-trivial properties \cite{meyer}.

The next and also most recent work of interest is due to Jacobson et
al.\ \cite{jacobson}, who used a baby-step giant-step algorithm to
tabulate all class groups to $10^{11}$
\cite[Algorithm~4.1]{buchmann}. The bound was further extended to
$2\cdot 10^{11}$ in the Master's thesis of Ramachandran
\cite{ramachandran}. The authors used Bach's averaging method in order
to determine a \emph{conditional} lower bound $h^*$ on $h(\Delta)$,
such that \mbox{$h^* \leq h(\Delta) \leq 2h^*$} \cite{bach2}. Due to the
nature of the baby-step giant-step algorithm, knowing this bound was
sufficient to be certain that the whole group was generated, assuming
the ERH. In order to eliminate the ERH dependency, they applied the
Eichler-Selberg trace formula \cite{schoof}, which relates sums of
Hurwitz class numbers to the trace of a certain Hecke operator
\cite[Formula~2.2]{jacobson}. Following Buell, the authors gathered
statistics on various hypotheses regarding $Cl_\Delta$.

In this paper, we push the feasibility limit further by tabulating
class groups for all negative $\Delta$ such that $|\Delta| < 2^{40} =
1.09951\ldots \times 10^{12}$. Using certain class number generating
functions \cite{watson}, we were able to compute all class numbers
$h(\Delta)$ for $\Delta \not \equiv 1 \pmod{8}$ via a product of two
large-degree power series; this method was inspired by that of Hart et
al.\ to tabulate all congruent numbers to $10^{12}$ \cite{hart}.
Computing the class numbers first allowed us to further achieve a
significant speedup in class group tabulation for these $\Delta$ by
only resolving the structure of possibly non-cyclic subgroups, i.e.,
for which all prime divisors of the order occur with multiplicity
greater than one.  The discriminants $\Delta \equiv 1 \pmod{8}$ were
handled separately using the previous technique of Jacobson et
al.\ \cite{jacobson}. In the end, we observed that the class groups
with $\Delta \not \equiv 1 \pmod{8}$ were computed over 4.72 times
faster than class groups with $\Delta \equiv 1 \pmod{8}.$

Unfortunately, the improved running time did not come for free, as we
were no longer able to test Bach's bound on the size of generators
required to produce the whole group \cite{bach1}. Nevertheless, we
were still able to gather extensive computational evidence in support
of Littlewood's bounds \cite{littlewood}, the Cohen-Lenstra heuristics
\cite{cohen2}, and Euler's hypothesis on idoneal numbers
\cite{kani}. We also further extended Buell's table of exotic groups
\cite{buell}.

Our paper is organized as follows. In Section \ref{sec:clnum}, we
present three formulas suitable for the tabulation of class numbers
$h(\Delta)$ with $\Delta \not \equiv 1
\pmod{8}$. Section~\ref{sec:out-of-core} is dedicated to the
out-of-core polynomial multiplication technique due to Hart et
al.\ \cite{hart}, which allows to compute the product of two large
polynomials that cannot fit into memory all together. Section
\ref{sec:clgrp} gives a brief overview of the techniques that were
used in order to tabulate $Cl_\Delta$ for $\Delta \equiv 1
\pmod{8}$. Section \ref{sec:performance} discusses the performance of our program.
In Section \ref{sec:results}, we present our numerical
results, which include statistics on various hypotheses regarding
$Cl_\Delta$ and the refined table of exotic groups. Section
\ref{sec:furtherwork} concludes the paper by giving a discussion of
various techniques, which can further accelerate the class number and
class group tabulation.

\section{Preliminaries} \label{sec:preliminaries}

Our method for computing the class numbers relies on classical
results related to binary quadratic forms.  Hence, our algorithms will
be described in the language of forms, and we will use the fact that
the ideal class group of the field $\Q(\sqrt{\D})$ of discriminant
$\Delta < 0$ is isomorphic to the group of equivalence classes of
binary quadratic forms of discriminant $\Delta.$

In particular, we consider binary quadratic forms from two different
perspectives. We use $(a, b, c)$ to denote a \emph{modern} binary
quadratic form, i.e.\ form which possesses a \emph{discriminant}
$\Delta = b^2 - 4ac$. We also use $(a, 2b, c)$ to denote a
\emph{classical} binary quadratic form of \emph{determinant} $D = b^2
- ac$, studied by Gau\ss\, \cite[Chapter 5]{gauss}.  In the first case,
the set of equivalence classes with respect to invertible integral
linear changes of variables forms a group under composition of forms.
An analogous observation can be made regarding the set of \emph{properly
primitive} (to be defined) classical quadratic forms.
In the first case, the class group of modern forms is isomorphic to
the ideal class group of $\Q(\sqrt{\Delta})$ whenever $\Delta$ is a
field discriminant (square-free integer congruent to $1 \bmod 4$ or
$4$ times a square-free number).  The second case is closely related; 
as the formula (\ref{eq:h}) suggests, the resulting group order
corresponding to determinant $D$ differs from $h(\Delta)$ by a factor
of three if $\Delta \equiv 5 \pmod{8}$, $\Delta \neq -3$ and is equal to $h(\Delta)$
otherwise.

We also require the following classifications of classical forms:
\begin{defin} \label{defin:primitive_form} \emph{\cite[\textsection 226]{gauss}}
Consider a quadratic form $(a, 2b, c)$ and its \emph{divisor} $\delta
= \gcd(a, b, c)$. Then $(a, 2b, c)$ is called \emph{primitive} if
$\delta = 1$, and \emph{derived} otherwise.
\end{defin}

\begin{defin} \label{defin:proper_form} \emph{\cite{kronecker}}
A primitive quadratic form $(a, 2b, c)$ is called \emph{uneven} if
$\gcd(a, 2b, c) = 1$, i.e., its coefficients $a$ and $c$ are not both
even; it is called \emph{even} otherwise. A derived form of divisor
$\delta$ is uneven when $(a/\delta, 2b/\delta, c/\delta)$ is uneven;
otherwise it is even.
\end{defin}

By $F(n)$ and $F_1(n)$ Kronecker denoted the total number of uneven
and even equivalence classes of forms of determinant $D = -n$,
respectively. We extend his notation by writing $\tilde{F}(n)$ and
$\tilde{F}_1(n)$ for the total number of \emph{primitive} uneven and
even equivalence classes of determinant $D = -n$, respectively. Note
that there exists a straightforward connection between $F(n)$ and
$\tilde{F}(n)$, and between $F_1(n)$ and $\tilde{F}_1(n).$ In
particular, if we write $n = g^2e$, where $e$ is square-free, then
\begin{equation} \label{eq:F_and_F_tilde}
F(n) = \sum\limits_{t\,|\,g}\tilde{F}\left(\frac{n}{t^2}\right) \,\,\, \textnormal{and} \,\,\, F_1(n) = \sum\limits_{t\,|\,g}\tilde{F}_1\left(\frac{n}{t^2}\right).
\end{equation}
To see this, observe that when $\gcd(a, b, c) = t > 1$, from every
uneven primitive form $(a/t, 2b/t, c/t)$ of determinant $-n/t^2$ we
can obtain every uneven derived form $(a, 2b, c)$ of determinant $D
= -n$ and divisor $\delta = t$. By counting all uneven primitive
forms with all uneven derived ones, we obtain $F(n)$. A similar
reasoning allows us to deduce the formula for $F_1(n)$.

\section{Class Number Tabulation Formulas} \label{sec:clnum}

We begin by considering the following Jacobi theta series:
$$
\vartheta_2(q) = 2\sum_{k = 0}^\infty q^{\left(k + \frac{1}{2}\right)^2} = 2q^{\frac{1}{4}} + 2q^{\frac{9}{4}} + 2q^{\frac{25}{4}} +  2q^{\frac{49}{4}} + \ldots\,\,\,;
$$ 
$$
\vartheta_3(q) = 1 + 2\sum_{k = 1}^\infty q^{k^2} = 1 + 2q + 2q^4 + 2q^9 + 2q^{16} + \ldots \,\,.
$$ 
In 1860, Kronecker found the connection that exists between
$\vartheta_3(q)$ and classical quadratic forms. We summarize his
result in Theorem \ref{thm:kronecker}.
\begin{thm}[{\cite{kronecker}}]\label{thm:kronecker}
Let $F(n)$ and $F_1(n)$ count equivalence classes $\left[(1,2\!\cdot\!
  0, 1)\right]$ and $[(2, 2\!\cdot\! 1, 2)]$, and classes derived from
them, as $1/2$ and $1/3$, respectively. Define $E(0) = 1/12$, $E(4n) =
E(n)$ for $n \neq 0$, and $E(n) = F(n) - F_1(n)$ for $n \not \equiv 0
\pmod{4}$. Then
\begin{equation} \label{eq:kronecker}
\vartheta_3^3(q) = 12\sum\limits_{n = 0}^\infty E(n)q^n.
\end{equation}
\end{thm}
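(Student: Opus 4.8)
The plan is to compare the two power series coefficient by coefficient and thereby reduce the identity \eqref{eq:kronecker} to a purely arithmetic statement. Expanding the cube termwise gives
$$\vartheta_3^3(q) = \Bigl(\sum_{k \in \Z} q^{k^2}\Bigr)^3 = \sum_{n = 0}^\infty r_3(n)\, q^n, \qquad r_3(n) = \#\{(x,y,z) \in \Z^3 : x^2 + y^2 + z^2 = n\},$$
so \eqref{eq:kronecker} is equivalent to the pointwise assertion $r_3(n) = 12\, E(n)$ for every $n \geq 0$. The constant term is immediate, since $r_3(0) = 1 = 12\, E(0)$.

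I would first dispose of the powers of four, matching the defining relation $E(4n) = E(n)$ with the analogous recursion for $r_3$. If $x^2 + y^2 + z^2 = 4n$, then modulo $4$ each square is $0$ or $1$, and a sum of three of them vanishes only when all three do; hence $x, y, z$ are even and $(x,y,z) \mapsto (x/2, y/2, z/2)$ is a bijection onto the representations of $n$. This gives $r_3(4n) = r_3(n)$ and reduces the theorem to the range $n \not\equiv 0 \pmod 4$, where $E(n) = F(n) - F_1(n)$.

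Next I would translate the right-hand side into ordinary (Hurwitz-weighted) class numbers. Write $H(\cdot)$ for the \emph{Hurwitz class number}, the weighted count of positive definite binary forms of a given discriminant that assigns the classes $[(1, 2\!\cdot\!0, 1)]$ and $[(2, 2\!\cdot\!1, 2)]$ their weights $1/2$ and $1/3$. A classical form $(a, 2b, c)$ of determinant $-n$ is the same object as a form of discriminant $-4n$, so all such classes together weigh $F(n) + F_1(n) = H(4n)$. The even ones are exactly the forms $2 \cdot (a', b', c')$, and division by $2$ is a weight-preserving bijection onto the forms of discriminant $-n$, whence $F_1(n) = H(n)$. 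Therefore
$$E(n) = F(n) - F_1(n) = H(4n) - 2H(n),$$
and the target becomes the classical three-squares class number formula $r_3(n) = 12\bigl(H(4n) - 2H(n)\bigr)$.

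The remaining and decisive step is this three-squares formula, and I expect it to be the main obstacle. The geometric idea is Gauss's: to a primitive representation $v$ with $|v|^2 = n$ one attaches the binary form carried by the rank-two lattice $v^\perp \cap \Z^3$ of determinant $-n$, letting the signed-permutation group $O_3(\Z)$ of order $48$ act on representations, with the extra automorphisms of the special vectors $(1,0,0)$ and $(1,1,1)$ as the source of the exceptional weights $1/2$ and $1/3$; the passage from primitive representations and primitive classes to all of them is then handled by \eqref{eq:F_and_F_tilde}. The genuinely hard input is that this local correspondence computes the global count \emph{exactly} — which holds because the ternary form $x^2+y^2+z^2$ is alone in its genus — and that the even classes enter so as to produce $H(4n) - 2H(n)$ rather than a single class number, in particular forcing $r_3(n) = 0$ for $n \equiv 7 \pmod 8$. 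As an alternative that bypasses the orbit bookkeeping, one may realize both $\vartheta_3^3$ and $\sum_n E(n)q^n$ as weight-$3/2$ modular forms on $\Gamma_0(4)$ lying in the finite-dimensional Eisenstein space, after which matching a few coefficients forces \eqref{eq:kronecker}; this route trades the genus argument for the (nontrivial) modularity of the class-number generating series.
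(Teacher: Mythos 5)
The paper offers no proof of this theorem: it is quoted directly from Kronecker's 1860 paper and used as a black box throughout, so there is no internal argument to compare yours against. On its own terms, your reduction is sound. The identification $\vartheta_3^3(q)=\sum_n r_3(n)q^n$, the constant term, and the recursion $r_3(4n)=r_3(n)$ (a sum of three squares is divisible by $4$ only when all three variables are even) are all correct. So is the translation $E(n)=F(n)-F_1(n)=H(4n)-2H(n)$ for $n\not\equiv 0\pmod 4$: the weights $1/2$ and $1/3$ on $[(1,2\cdot 0,1)]$ and $[(2,2\cdot 1,2)]$ match $h_\omega(-4)$ and $h_\omega(-3)$, the derived classes account for the conductor sum in Definition~\ref{defin:hurwitz}, and your unified expression is consistent with the paper's own Theorem~\ref{thm:E_and_H} (one checks $H(n)=0$ for $n\equiv 1,2\pmod 4$, $H(4n)=4H(n)$ for $n\equiv 3\pmod 8$, and $H(4n)=2H(n)$ for $n\equiv 7\pmod 8$).

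The genuine gap is the step you yourself flag as decisive: the three-squares class number formula $r_3(n)=12\left(H(4n)-2H(n)\right)$. Everything you prove before that point is elementary bookkeeping; this identity \emph{is} Kronecker's theorem, and you do not prove it. You sketch two viable routes --- Gauss's correspondence sending a primitive representation $v$ of $n$ to the binary form on $v^{\perp}\cap\Z^3$, with the $O_3(\Z)$-orbit count and the one-class-per-genus property of $x^2+y^2+z^2$ supplying exactness, or the identification of both sides as weight-$3/2$ Eisenstein series on $\Gamma_0(4)$ --- but neither is carried out, and each contains the real work (the genus argument in the first case, the modularity of the class-number generating series in the second, which is essentially equivalent to what is being proved). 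As submitted, your proposal is a correct reduction of the theorem to an equivalent classical statement together with an accurate map of how that statement is proved in the literature, not a proof; to complete it you would need to execute one of the two routes in full, including the orbit and automorphism bookkeeping that produces the exceptional weights and forces $r_3(n)=0$ for $n\equiv 7\pmod 8$.
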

Though not obvious at first sight, the formula (\ref{eq:kronecker})
allows us to tabulate class numbers $h(\Delta)$. Recall Gau\ss's result
that $\tilde{F}(n)$ is a multiple of $\tilde{F}_1(n)$
\cite[\textsection 256]{gauss}:
\begin{equation} \label{eq:F1_and_F}
\tilde{F}_1(n) = \begin{cases}
\tilde{F}(n), & \textnormal{when $n \equiv 7 \pmod{8}$ or $n = 3$};\\
\tilde{F}(n)/3, & \textnormal{when $n \equiv 3 \pmod{8}$ and $n \neq 3$};\\
0, & \textnormal{when $n \not \equiv 3 \pmod{4}$.}
\end{cases}
\end{equation}
We may now prove Theorem \ref{thm:h_and_F}, which connects $h(\Delta)$
to $\tilde{F}(n)$, where $\Delta = -4n$ or $\Delta = -n$, depending on
the congruence class of $\Delta$ modulo 4.
\begin{thm} \label{thm:h_and_F}
For $\Delta < 0$ the following relation holds:
\begin{equation} \label{eq:h}
h(\Delta) = \begin{cases}
\tilde{F}(n), & \textnormal{when $\Delta \equiv 0, 1, 4 \pmod{8}$ or $\Delta = -3$};\\
\tilde{F}(n)/3, & \textnormal{when $\Delta \equiv 5 \pmod{8}$ and $\Delta \neq -3$},
\end{cases}
\end{equation}
where
\begin{equation} \label{eq:n_delta}
n = \begin{cases}
-\Delta / 4, & \textnormal{if $\Delta \equiv 0 \pmod{4}$;}\\
-\Delta, & \textnormal{if $\Delta \equiv 1 \pmod{4}$.}
\end{cases}
\end{equation}
\end{thm}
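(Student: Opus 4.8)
The plan is to identify $h(\Delta)$ --- the order of the group of primitive modern forms of discriminant $\Delta$, which by the correspondence recalled in Section \ref{sec:preliminaries} is isomorphic to $Cl_\Delta$ --- with a count of classical forms of determinant $-n$, and then to translate between the two counts $\tilde{F}(n)$ and $\tilde{F}_1(n)$ using Gau\ss's relation (\ref{eq:F1_and_F}). The essential link is that a classical form $(a, 2b, c)$ of determinant $b^2 - ac$, read as a modern form, has modern discriminant $(2b)^2 - 4ac = 4(b^2 - ac)$; thus classical forms of determinant $-n$ sit among modern forms of discriminant $-4n$. Since only $\Delta \equiv 0, 1 \pmod{4}$ occur as discriminants, I would split into these two cases, matching the two definitions of $n$ in (\ref{eq:n_delta}).

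For $\Delta \equiv 0 \pmod{4}$ I set $\Delta = -4n$, so $n = -\Delta/4$. Every primitive modern form $(A, B, C)$ of discriminant $\Delta$ has $B$ even, say $B = 2b$, and reinterpreting $(A, 2b, C)$ as a classical form gives determinant $b^2 - AC = \Delta/4 = -n$. The modern primitivity condition $\gcd(A, 2b, C) = 1$ is precisely Kronecker's \emph{uneven} condition from Definition \ref{defin:proper_form}, and it forces $\gcd(A, b, C) = 1$, so the image is primitive in the sense of Definition \ref{defin:primitive_form}. Conversely every uneven primitive classical form of determinant $-n$ arises this way. Because the gcd conditions and the parity of the middle coefficient are invariant under the $\mathrm{SL}_2(\Z)$-substitutions defining proper equivalence, this assignment descends to a bijection of proper equivalence classes, yielding $h(\Delta) = \tilde{F}(n)$ and covering the cases $\Delta \equiv 0, 4 \pmod{8}$.

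For $\Delta \equiv 1 \pmod{4}$ I set $\Delta = -n$ with $n \equiv 3 \pmod{4}$. Now the middle coefficient of a primitive modern form is odd, so the previous map does not apply; instead I would use the scaling $(A, B, C) \mapsto (2A, 2B, 2C)$, read as a classical form of determinant $B^2 - 4AC = \Delta = -n$. Its coefficient gcd is $\gcd(2A, 2B, 2C) = 2$, so it is \emph{even} (improperly primitive), while $\gcd(2A, B, 2C) = 1$ because $B$ is odd, so it is primitive; the inverse halves all three coefficients. Since $(2f)\circ g = 2(f \circ g)$ for $g \in \mathrm{SL}_2(\Z)$, this map is equivariant and bijective on classes, giving $h(\Delta) = \tilde{F}_1(n)$. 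Feeding this into (\ref{eq:F1_and_F}) finishes the case: $n \equiv 7 \pmod{8}$ (i.e.\ $\Delta \equiv 1 \pmod{8}$) and $n = 3$ (i.e.\ $\Delta = -3$) both yield $h(\Delta) = \tilde{F}(n)$, whereas $n \equiv 3 \pmod{8}$ with $n \neq 3$ (i.e.\ $\Delta \equiv 5 \pmod{8}$, $\Delta \neq -3$) yields $h(\Delta) = \tilde{F}(n)/3$; the branch $n \not\equiv 3 \pmod{4}$ is vacuous since $n = -\Delta \equiv 3 \pmod{4}$.

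I expect the main obstacle to be the careful verification that both assignments are genuinely equivariant bijections on proper equivalence \emph{classes} rather than merely on individual forms, together with the bookkeeping surrounding $\Delta = -3$. That exceptional discriminant is exactly the point where (\ref{eq:F1_and_F}) passes from its generic $n \equiv 3 \pmod{8}$ branch to the special $n = 3$ branch, so the apparent exception in (\ref{eq:h}) is inherited directly from Gau\ss's relation rather than requiring a separate argument; I would confirm it by hand, noting $h(-3) = 1 = \tilde{F}_1(3) = \tilde{F}(3)$, so that no extra correction is needed.
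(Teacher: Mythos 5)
Your proposal is correct and follows essentially the same route as the paper: both arguments establish the two bijections (primitive modern forms of discriminant $\Delta \equiv 0 \pmod 4$ with primitive uneven classical forms of determinant $\Delta/4$, and primitive modern forms of discriminant $\Delta \equiv 1 \pmod 4$ with primitive even classical forms of determinant $\Delta$, via halving/doubling the coefficients) and then invoke Gau\ss's relation (\ref{eq:F1_and_F}) to convert $\tilde{F}_1(n)$ into $\tilde{F}(n)$ or $\tilde{F}(n)/3$. The only cosmetic difference is that you run the correspondences from modern to classical forms while the paper runs them in the opposite direction, and you spell out the equivariance and the $\Delta=-3$ check slightly more explicitly.
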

\begin{proof}
Consider a primitive binary quadratic form $(a, 2b, c)$ of determinant $D = -n$,
where $n$ is positive. When $D \equiv 1 \pmod{4}$ and $(a, 2b, c)$ is
even, i.e.\ $a$, $c$ are even and $b$ is odd, this form can be
transformed into a form $(a/2,b,c/2)$ with discriminant $\Delta \equiv
1 \pmod{4}$. This map is bijective, since every form $(a, b, c)$ of
discriminant $\Delta$ with odd $b$ corresponds to a primitive even
form $(2a, 2b, 2c)$ of determinant $D$. We conclude that $h(\Delta) =
\tilde{F}_1(n)$. When $D \not\equiv 1 \pmod{4}$, there are no
primitive even forms, and a primitive uneven form $(a, 2b, c)$ with
determinant $D$ already has a fundamental discriminant $\Delta =
4D$. There are no other forms $(a, b, c)$ of discriminant $\Delta$
with $\gcd(a, b, c) = 1$, besides those of determinant $D$, so
$h(\Delta) = \tilde{F}(n)$. In the end, we obtain the relation
(\ref{eq:h}).
\end{proof}

According to Theorems \ref{thm:kronecker} and \ref{thm:h_and_F}, by
cubing $\vartheta_3(q)$ we can tabulate $h(\Delta)$ for every
fundamental discriminant $\Delta$, except for $\Delta \equiv 1
\pmod{8}$, because in this case we have $F(n) = F_1(n)$ and thus $E(n)
= 0.$

Before proceeding further, recall the definition of a Hurwitz class number $H(n)$.
\begin{defin} \label{defin:hurwitz}
Let
\begin{equation} \label{eq:h_omega}
h_\omega(\Delta) = \begin{cases}
h(\Delta), & \textnormal{if $\Delta < -4$;}\\
1/2, & \textnormal{if $\Delta = -4$;}\\
1/3, & \textnormal{if $\Delta = -3$,}\\
\end{cases}
\end{equation}
and consider negative $\Delta = f^2 \Delta_1$, where $\Delta_1$ is a
fundamental discriminant. Then
$$
H\left(|\Delta|\right) = \sum\limits_{t\,|\,f}h_\omega\left(\frac{\Delta}{t^2}\right),
$$
is called the \emph{Hurwitz class number}.
\end{defin}

In Theorem \ref{thm:h_and_F} we determined the connection that exists
between $h(\Delta)$ and the number of \emph{primitive} uneven classes
$\tilde{F}(n)$. However, the formula (\ref{eq:kronecker}) has $F(n)$
instead of $\tilde{F}(n)$, which also take the \emph{derived} uneven
classes into account. In fact, it is not hard to prove that $F(n) =
\tilde{F}(n)$ and $F_1(n) = \tilde{F}_1(n)$ hold if and only if $n$ is
square-free. In order to establish this connection for an arbitrary
$n$, we aim to prove Theorem \ref{thm:E_and_H}, which relates $F(n)$
to the Hurwitz class number $H(n)$ or $H(4n)$, depending on the
congruence class of $n$ modulo 4. To the best of our knowledge, the
formula (\ref{eq:E_and_H}) is not present in any literature available,
though its statement for the special case of square-free $n > 4$ is well known
and can be found, for example, in the monograph of Grosswald \cite[Chapter~4, Theorem~2]{grosswald}.

\begin{thm} \label{thm:E_and_H}
Let $E(n)$ be as in Theorem \ref{thm:kronecker}. Then
\begin{equation} \label{eq:E_and_H}
E(n) = \begin{cases}
1/12, & \textnormal{when $n = 0$;}\\
E(n/4), & \textnormal{when $n \equiv 0 \pmod{4}$ and $n \neq 0$};\\
H(4n), & \textnormal{when $n \equiv 1, 2 \pmod{4}$;}\\
2H(n), & \textnormal{when $n \equiv 3 \pmod{8}$;}\\
0, & \textnormal{when $n \equiv 7 \pmod{8}$},
\end{cases}
\end{equation}
where $H(n)$ denotes the Hurwitz class number.
\end{thm}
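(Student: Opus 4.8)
The plan is to collapse the five cases into two genuine computations and obtain the other three essentially for free. The cases $n = 0$ and $n \equiv 0 \pmod 4$ are nothing but the recursive definition of $E(n)$ recorded in Theorem \ref{thm:kronecker}, so I would dispose of them at once. For $n \not\equiv 0 \pmod 4$ I would start from $E(n) = F(n) - F_1(n)$ and expand both terms using the decomposition (\ref{eq:F_and_F_tilde}), writing $n = g^2 e$ with $e$ square-free. Since $4 \nmid n$, the integer $g$ is forced to be odd, so for every $t \mid g$ one has $t^2 \equiv 1 \pmod 8$ and hence $n/t^2 \equiv n \pmod 8$. This uniformity is the crucial point: every summand $\tilde{F}(n/t^2) - \tilde{F}_1(n/t^2)$ lies in the same residue class modulo $8$ as $n$, so Gauss's relation (\ref{eq:F1_and_F}) applies identically to each term.

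With this in hand the three nontrivial congruences separate cleanly. When $n \equiv 7 \pmod 8$, relation (\ref{eq:F1_and_F}) gives $\tilde{F}_1(n/t^2) = \tilde{F}(n/t^2)$ for every $t$, so each summand vanishes and $E(n) = 0$. When $n \equiv 1, 2 \pmod 4$, i.e.\ $n \not\equiv 3 \pmod 4$, it forces $\tilde{F}_1(n/t^2) = 0$, whence $E(n) = F(n) = \sum_{t \mid g} \tilde{F}(n/t^2)$. When $n \equiv 3 \pmod 8$, it yields $\tilde{F}_1(n/t^2) = \tilde{F}(n/t^2)/3$ (the single value $n/t^2 = 3$ handled separately), so that $\tilde{F}(n/t^2) - \tilde{F}_1(n/t^2) = \tfrac{2}{3}\tilde{F}(n/t^2)$.

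The heart of the argument is then to convert each primitive count into a weighted class number. Using Theorem \ref{thm:h_and_F}, one has $\tilde{F}(m) = h(-4m)$ when $m \equiv 1, 2 \pmod 4$ and $\tilde{F}(m) = 3\,h(-m)$ when $m \equiv 3 \pmod 8$. Combining this with Kronecker's weights from Theorem \ref{thm:kronecker} (the class $[(1, 2\!\cdot\!0, 1)]$ counted as $1/2$ and $[(2, 2\!\cdot\!1, 2)]$ as $1/3$), I would verify that in fact $\tilde{F}(m) = h_\omega(-4m)$ in the first situation and $\tilde{F}(m) - \tilde{F}_1(m) = 2\,h_\omega(-m)$ in the second, now uniformly including the degenerate values $m = 1$ and $m = 3$. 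Substituting gives $E(n) = \sum_{t \mid g} h_\omega(-4n/t^2)$ and $E(n) = 2\sum_{t \mid g} h_\omega(-n/t^2)$, respectively.

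Finally I would match these against Definition \ref{defin:hurwitz}. The one thing to check is that the square part $g$ of $n$ coincides with the conductor $f$ appearing in $-4n = f^2\Delta_1$ (resp.\ $-n = f^2\Delta_1$): a short congruence analysis shows that $-4e$ when $n \equiv 1 \pmod 4$, that $-8e'$ with $e = 2e'$ when $n \equiv 2 \pmod 4$, and that $-e$ when $n \equiv 3 \pmod 8$ are each fundamental, so $f = g$ and the two index sets agree. The sums are then literally $H(4n)$ and $2H(n)$, completing the proof. I expect the main obstacle to be the bookkeeping of weights: one must confirm that the fractional weights $1/2$ and $1/3$ built into Kronecker's $F$ and $F_1$ are exactly what upgrades the ordinary class numbers of Theorem \ref{thm:h_and_F} to the Hurwitz weights $h_\omega$ at the special discriminants $-4$ and $-3$, since otherwise the identity would fail precisely on the perfect-square terms and on the $n = 3$ term.
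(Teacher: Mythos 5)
Your proposal is correct and follows essentially the same route as the paper's proof: expand $E(n)=F(n)-F_1(n)$ via (\ref{eq:F_and_F_tilde}), apply Gau\ss's relation (\ref{eq:F1_and_F}) together with Theorem \ref{thm:h_and_F} to each summand $\tilde{F}(n/t^2)-\tilde{F}_1(n/t^2)$, and match the resulting sum against Definition \ref{defin:hurwitz}. You are somewhat more explicit than the paper about the mod-$8$ invariance of $n/t^2$, the identification of the conductor with $g$, and the weight bookkeeping at the degenerate terms $m=1$ and $m=3$, but these are refinements of the same argument rather than a different approach.
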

\begin{proof}
Consider the following two cases, corresponding to square-free values of $n$:
\begin{enumerate}
\item Let $n \equiv 1, 2 \pmod{4}$. For $n = 1$, we can verify that
  the formula (\ref{eq:E_and_H}) gives us the correct result. For $n
  \neq 1$, from (\ref{eq:F1_and_F}) we know that $\tilde{F}_1(n) = 0$,
  and from (\ref{eq:h}) we know that $h(\Delta) = \tilde{F}(n)$, where
  $\Delta = -4n$ according to the relation
  (\ref{eq:n_delta}). Therefore, $E(n) = \tilde{F}(n) - \tilde{F}_1(n)
  = h_\omega(-4n)$;

\item Let $n \equiv 3 \pmod{8}$. For $n = 3$, we can verify that the
  formula (\ref{eq:E_and_H}) gives us the correct result. For $n \neq
  3$, from (\ref{eq:F1_and_F}) we know that $\tilde{F}(n) =
  3\tilde{F}_1(n)$, and from (\ref{eq:h}) we know that $h(\Delta) =
  \tilde{F}_1(n)$, where $\Delta = -n$ according to the relation
  (\ref{eq:n_delta}). We obtain $E(n) = \tilde{F}(n)-\tilde{F}_1(n) =
  2h_\omega(-n)$.
\end{enumerate}

Now, consider an arbitrary $n = g^2e$, where $e$ is square-free. If we
now recall formulas from (\ref{eq:F_and_F_tilde}), then by
Definition~\ref{defin:hurwitz} we obtain formulas for $n \equiv 1, 2$ (mod 4) and
$n \equiv 3$ (mod 8):
\begin{align*}
E(n) &= \sum\limits_{t\,|\,g}\left[\tilde{F}\left(\frac{n}{t^2}\right)
  - \tilde{F}_1\left(\frac{n}{t^2}\right)\right] \\
&= \begin{cases}
\sum\limits_{t\,|\,g}h_\omega\left(\frac{-4n}{t^2}\right) = H(4n), &
\textnormal{if $n \equiv 1, 2 \pmod{4}$;}\\
2\sum\limits_{t\,|\,g}h_\omega\left(\frac{-n}{t^2}\right) = 2H(n), &
\textnormal{if $n \equiv 3 \pmod{8}$. \qedhere} \\
\end{cases}
\end{align*}
\end{proof}

According to Theorem~\ref{thm:E_and_H}, by cubing $\vartheta_3(q)$ we
can tabulate Hurwitz class numbers $H(n)$. However, the formula
(\ref{eq:kronecker}) is quite inefficient for our purposes, as we are
interested in only fundamental discriminants $\Delta$, which
correspond to coefficients of $\vartheta_3^3(q)$ of the form $16k +
4$, $16k + 8$, $8k + 3$ and $8k + 7$ for some non-negative integer
$k$.  We expect to have around $(3/\pi^2)N \approx 0.304 N$
fundamental discriminants, satisfying $|\Delta| < N$ \cite[Section
  5.10]{cohen3}. Fortunately, there exist three alternative formulas,
namely (1.13), (1.12) and (1.14) of \cite{watson}, which can be
derived easily from (\ref{eq:kronecker}) \cite{bell}:
\begin{equation}  \label{eq:2mod4}
\sum_{k = 0}^\infty F(4k+2)q^{k} = \nabla^2(q^2)\vartheta_3(q);
\end{equation}
\begin{equation}  \label{eq:1mod4}
2\sum_{k = 0}^\infty F(4k+1)q^k =  \nabla(q^2)\vartheta_3^2(q);
\end{equation}
\begin{equation} \label{eq:3mod8}
\sum_{k = 0}^\infty F(8k+3)q^{k} = \nabla^3(q),
\end{equation}
where
\begin{align*}
\nabla(q) &= \frac{1}{2}\vartheta_2(\sqrt q)q^{-\frac{1}{8}} \\
&= \frac{1}{2}\cdot 2\sum_{k = 0}^\infty {\sqrt q}^{\left(k +
  \frac{1}{2}\right)^2} \cdot q^{-\frac{1}{8}} \\
&= \sum_{k = 0}^\infty q^{\frac{k(k+1)}{2}} = 1 + q + q^3 + q^6 + q^{10} + \ldots \,\, .
\end{align*}
In order to tabulate all class numbers corresponding to fundamental
$\Delta \not \equiv 1 \pmod{8}$ and $|\Delta| < N$, it is sufficient
to compute (\ref{eq:2mod4}) and (\ref{eq:1mod4}) to degrees $\lfloor N
/ 16 \rfloor$, and (\ref{eq:3mod8}) to degree $\lfloor N / 8
\rfloor$. This can be done by multiplying polynomials, obtained by
truncating series on the right sides of the equations above to a
specific degree.

Although this idea of reducing the class number tabulation problem
sounds good in theory, there are significant practical obstacles when
large bounds on the discriminant are considered.  In particular, the
polynomials involved are too large to fit into computer memory, so we
have to perform our multiplication out-of-core, i.e.\ with the usage
of the hard disk. We discuss the out-of-core polynomial multiplication
technique in Section \ref{sec:out-of-core}.

\section{Out-of-Core Multiplication} \label{sec:out-of-core}

In order to compute $h(\Delta)$ for all fundamental discriminants
$\Delta < 0$, satisfying $|\Delta| < N$ and $\Delta \not \equiv 1
\pmod{8}$, we aim to compute relations (\ref{eq:2mod4}),
(\ref{eq:1mod4}) and (\ref{eq:3mod8}) to degrees $\lfloor N / 16
\rfloor$, $\lfloor N / 16 \rfloor$ and $\lfloor N / 8 \rfloor$,
respectively.

In our computations, $N$ was chosen to be $2^{40}$. If we assume that
each coefficient of some polynomial $f(x)$ of degree $\lfloor N / 8
\rfloor$ fits into 4 bytes, then we would require 512 GB to fit $f(x)$
into memory. Hence, in order to store two polynomials, $f(x)$ and
$g(x)$, as well as the resulting polynomial $h(x)$, we need 1.5 TB,
not to mention that the Fast-Fourier Transform (FFT), which we use to
multiply polynomials, requires a lot of memory for intermediate
results. Such an intensive memory requirement forces us to perform
polynomial multiplication \emph{out-of-core}, i.e., with the usage of
the hard disk.

The first step is to reduce the degree of the polynomials to be
multiplied.  Following Hart et al.\ \cite{hart}, we convert
polynomials of large degree with small coefficients into polynomials
of small degree with large coefficients by utilizing \emph{Kronecker
  substitution}. Consider the polynomial
$$
f(x) = f_0  + f_1x + f_2x^2 + \ldots + f_{N - 1}x^{N-1} \in \mathbb{Z}[x]
$$
of degree $N - 1$. Fix a \emph{bundling parameter} $B$, dividing $N$,
and let $N_0 = N/B$. Then we can write $\hat f(x, y)$, satisfying
$\hat f(x^B, x) = f(x)$, as follows:
$$
\hat f(x, y) = \sum_{n = 0}^{N_0 - 1} F_n(y)x^{n} = F_0(y) + F_1(y)x +
F_2(y)x^{2} + \ldots + F_{N_0 - 1}(y)x^{N_0 - 1}, 
$$
where
$$
F_n(y) = f_{nB} + f_{nB + 1}y + \ldots + f_{(n+1)B - 1}y^{B - 1}.
$$
If all the coefficients of $f(x)$ fit into $s$ bits, we can
\emph{bundle} them by evaluating each $F_n(y)$ at $2^s$, and obtain
the following \emph{bundled polynomial} $F(x)$:
\begin{equation} \label{eq:bundled_poly}
F(x) = \sum_{n = 0}^{N_0 - 1} F_n\left(2^s\right)x^n.
\end{equation}
While $f(x)$ has coefficients of size $s$ bits and degree $N - 1$, the
bundled polynomial $F(x)$ has coefficients of size $Bs$ bits and a
smaller degree $N_0 - 1$. Now, in order to perform
a multiplication $h(x) = f(x) \times g(x)$, one has to bundle
coefficients of $g(x)$ with the same parameters $B$ and $s$, and
obtain a bundled polynomial $G(x)$. The polynomial $H(x) = F(x) \times
G(x)$, the coefficients of which fit into $(2B-1)s$ bits, will
therefore embed information on coefficients of $h(x)$.

As a technical point, note that $H(x)$ is \emph{not} a bundled
polynomial of $h(x)$.  In order to extract the coefficients of $h(x) =
\sum_{k = 0}^{N-1}h_kx^k$ from $H(x) = \sum_{n = 0}^{N_0 - 1}H_nx^n$,
a simple computation reveals that the summands of $h_k = \sum_{i =
  0}^k f_ig_{k-i}$ with $nB \leq k \leq nB + B - 2$ occur in both
$H_{n-1}$ and $H_n$ for some positive integer $n$. In fact, if we let
$H_n = \sum_{j = 0}^{2B-2}H_n^{(j)}2^{js}$, where $H_n^{(j)}$ are all
positive, then $h_k = H_{n}^{(k)} + H_{n-1}^{(B + k)}$. The only
exceptions correspond to $h_k = H_0^{(k)}$ for $k < B$, and for
$h_{tB-1} = H_{t-1}^{(tB-1)}$ for some integer $t > 1$.  Nevertheless,
it is a simple matter to recover the $h_k$ given $H(x).$

At this point we have reduced the problem to a multiplication of
smaller-degree polynomials, but with much larger coefficients.  The
next step is to reduce the coefficient sizes to the point that the
polynomials involved can be fit into available memory.  This is
accomplished via many \emph{Number Theoretic Transforms} (NTT) with
\emph{Chinese Remainder Theorem} (CRT) reconstitution\footnote{The
  paper of Hart contains a good survey on various out-of-core FFT
  methods and their applications \cite[Section 3]{hart}.}. The idea is
simple: in order to multiply two bundled polynomials $F(x)$ and $G(x)$
with large coefficients, one chooses $n$ many primes $p_0, \ldots,
p_{n-1}$, and performs reduction of coefficients of $F(x)$ and $G(x)$
modulo each $p_i$ for $0 \leq i < n$ using a \emph{remainder tree}
\cite{borodin}.  After that, $n$ pairs of polynomials are multiplied
(possibly in parallel) over each finite field $\mathbb F_{p_i}$, and
as a result, each polynomial will contain residues of $H(x) = F(x)
\times G(x)$ modulo $p_i$. In the end, the coefficients of $H(x)$ can
be reconstructed with the Chinese Remainder Theorem, and this
procedure can also be easily parallelized. Note that the intermediate
results, namely reduced polynomials and the result of polynomial
multiplications, are stored in $m$ files on the hard disk. We observed
that the choice of the number of files does not affect the performance
of our program, and suggest to set $m$ to be an integer multiple of
number of threads used for computations. In our computations, we used
64 threads and produced $m = 2^{12} = 4096$ files for each congruence
class of $\Delta.$  With this choice of $m$, each file contains at
most 10.3 million class groups, providing a reasonable balance
between file size and total number of files.

In order for the technique described previously to work one has to
know ahead an upper bound $C$ on coefficients of $h(x)$, and choose
primes such that $C < \prod_{i = 0}^{n-1}p_i$.
Depending on the amount of memory available, each $p_i$ is chosen in
such a way that the reduced polynomials in $\mathbb{F}_{p_i}[x]$ can
be comfortably multiplied in main memory.

\subsection{Computational Parameters} \label{subsec:parameters}

The choices of a bundling parameter $B$ and number of CRT primes can
be optimized based on the amount of computer memory available. In
order to make a proper choice of the \emph{bit size parameter} $s$, we
need to know how many bits are required to represent coefficients of
(\ref{eq:2mod4}), (\ref{eq:1mod4}) and (\ref{eq:3mod8}). In other
words, we need to determine an explicit, unconditional upper bound on
$H(n)$. To this end, consider the analytic class number formula
$$
h_\omega(\Delta) = \frac{1}{\pi}\sqrt{|\Delta|}L(1,\chi_\Delta), \,\,\, \textnormal{where} \,\,\, L(1,\chi_\Delta) = \sum_{m = 1}^\infty \frac{1}{m}\left(\frac{\Delta}{m}\right),
$$
$h_\omega(\Delta)$ is defined in (\ref{eq:h_omega}), and $\left(\frac{\Delta}{m}\right)$ is the Dirichlet symbol. To find the bit size of $h(\Delta)$, we utilize Ramar\'e's unconditional bound on $L(1, \chi_\Delta)$ \cite{ramare}:
\begin{equation} \label{eq:bound}
L(1,\chi_\Delta) \leq a \log |\Delta| + b,
\end{equation}
where
$$
\begin{array}{l l}
a = \frac{1}{4}, b = \frac{5}{4} - \frac{\log 3}{2}, & \textnormal{if
  $\Delta \equiv 0 \pmod{4}$};\\
a = \frac{1}{2}, b = \frac{5}{2} - \log{6}, & \textnormal{if $\Delta
  \equiv 1 \pmod{4}$}.
\end{array}
$$
We may now apply Ramar\'e's bounds (\ref{eq:bound}) to determine an
upper bound on $H(n)$ for every $n < N$ of the form $n = g^2e$, where $e$ is square-free:
$$
H(n) \leq \frac{1}{\pi}\sum_{t\,|\,g}\sqrt{\frac{n}{t^2}}\left(a\log\frac{n}{t^2} + b\right) < \frac{1}{\pi}\sqrt{n}(a\log|\Delta| + b)\sum_{t\,|\,g}\frac{1}{t}.
$$
To estimate the sum $\sum_{t\,|\,g}\frac{1}{t}$ for $N = 2^{40}$, we
picked the largest possible $g = 605395$, and found the integer $n =
554400$ that does not exceed $g$, which has the largest value of
$\sum_{t\,|\,n}\frac{1}{t} = \frac{1209}{275}$.  Then, for
\begin{equation} \label{eq:CN}
C_N = \left\lfloor \frac{1209}{275}\cdot \frac{1}{\pi}\sqrt{N}\left(a\log N + b\right) \right\rfloor, \,\,\, \textnormal{where}\,\,\, N \leq 2^{40},
\end{equation}
and $a$ and $b$ as in (\ref{eq:bound}), we have that $H(n) < C_N$ for all $n < N$.

Now we can explain how to compute the bit size parameter $s$. Recall
that the main class number tabulation formulas (\ref{eq:2mod4}),
(\ref{eq:1mod4}) and (\ref{eq:3mod8}) require $C_N$, $2C_N$ and $3C_N$
as their upper bounds, respectively. Considering this, the formula for
$s$ is given as
\begin{equation} \label{eq:s}
s = \begin{cases}
\lceil\log_2C_N\rceil, & \textnormal{for (\ref{eq:2mod4})};\\
\lceil\log_2(2C_N)\rceil, & \textnormal{for (\ref{eq:1mod4});} \,\,\,\,\,\, \textnormal{where $N \leq 2^{40}$.}\\
\lceil\log_2(3C_N)\rceil, & \textnormal{for (\ref{eq:3mod8}),}
\end{cases}
\end{equation}

Finally, we need to determine how many primes to choose with respect
to the bundling parameter $B$ in order to restore coefficients of
$H(x) = F(x) \times G(x)$, which all fit into $s$ bits. Recall that
each coefficient of $H(x)$ has size $(2B - 1)s$. In order to restore
coefficients of $H(x)$ with the CRT algorithm, we need to pick the
primes $p_0, \ldots, p_{n-1}$ so that $(2B-1)s < \log_2(p_0 \cdot
\ldots \cdot p_{n-1})$. In our implementation, we chose the smallest
prime $p_0$ exceeding some positive lower bound $P$, and $n - 1$
primes $p_1, \ldots, p_{n-1}$, which consecutively follow after
$p_0$. We choose $n$ so that
$$
(2B-1)s \leq n\log_2p_0 < \sum_{i = 0}^{n-1} \log_2p_i,
$$
i.e.\
\begin{equation} \label{eq:n}
n = \left\lceil\frac{(2B - 1)s}{\log_2 p_0}\right\rceil.
\end{equation}
Note that for large $p_0$ and small $n$, the difference between
$n\log_2p_0$ and $\sum_{i = 0}^{n-1} \log_2p_i$ becomes
negligible. Following Hart et al. \cite{hart}, we chose $p_0$ to be
the smallest prime exceeding $P = 2^{62}$, which fits into a single
machine word on a 64-bit system.

\subsection{Complexity Analysis} \label{subsec:complexity}

Before proceeding to the complexity analysis, we first summarize the
process of computation of $h(x) = f(x) \times g(x)$. Given two
polynomials, $f(x)$ and $g(x)$, both of degree $N - 1$, the bundling
parameter $B$ (which for convenience divides $N$), the bit size
parameter $s$, and $n$ primes $p_0, \ldots, p_{n-1}$, we compute the
product of two polynomials in five stages:
\begin{enumerate}
  \item Compute the bundled polynomials $F(x)$ and $G(x)$ of $f(x)$
    and $g(x)$, respectively, using Kronecker substitution;

  \item Reduce the coefficients of $F(x)$ and $G(x)$ modulo primes
    $p_0, \ldots, p_{n-1}$ using the remainder tree \cite{borodin} in
    order to obtain the reduced polynomials $F_{p_i}(x)$ and
    $G_{p_i}(x)$ in $\mathbb F_{p_i}[x]$ for $0 \leq i < n$;

  \item Compute $H_{p_i}(x) = F_{p_i}(x) \times G_{p_i}(x)$ in
    $\mathbb F_{p_i}[x]$ for each $0 \leq i < n$;

  \item Compute $H(x)$ (which is equal to $F(x) \times G(x)$) by
    reconstructing its coefficients from $H_{p_0}(x), \ldots,
    H_{p_{n-1}}(x)$ with the CRT algorithm;

  \item Extract the coefficients of $h(x) = f(x) \times g(x)$ from
    $H(x)$.
\end{enumerate}

The pseudocode of this algorithm can be found in the original paper of
Hart et al.\ \cite[Section 4.1]{hart}. Note that their algorithm
corresponds to the case $s = 16$. The generalized version of the
algorithm for an arbitrary positive integer $s$ can be found in
\cite[Section 4.2]{mosunov1}. In Theorem \ref{thm:complexity1}, we
give the asymptotic bit-complexity of this algorithm as a function of
the polynomial degree ($N$) and the bundling and bit size parameters.
\begin{thm} \label{thm:complexity1}
Consider two polynomials, $f(x)$ and $g(x)$, both of degree $N - 1$,
whose coefficients can be initialized in $O(N)$ bit operations. Using
the technique described above, the product $h(x) = f(x) \times g(x)$
can be computed in
\begin{equation} \label{eq:o1}
O\left(Ns\left(\log(Bs)\right)^{2+\varepsilon} + Ns\left(\log \frac{N}{B}\right)^{1+\varepsilon}\right)
\end{equation}
bit operations, where $B$ is the bundling parameter, and $s$ is the
bit size parameter.
\end{thm}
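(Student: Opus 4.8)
The plan is to bound the cost of each of the five stages listed above in terms of $N$, $B$, and $s$, and to observe that the total is dominated by the multi-modular reduction (stage 2), the pointwise products over the $\mathbb{F}_{p_i}$ (stage 3), and the CRT reconstruction (stage 4). Throughout I would use that the primes $p_0, \ldots, p_{n-1}$ are of fixed word size, so $\log_2 p_0 = \Theta(1)$ and hence, by (\ref{eq:n}), the number of primes is $n = \Theta(Bs)$; correspondingly the product $\prod_i p_i$ has $\Theta(Bs)$ bits, matching the size $(2B-1)s$ of the coefficients of $H(x)$. I would also record at the outset that stages 1 and 5 are cheap: bundling packs $B$ coefficients of $s$ bits into one integer of $Bs$ bits, and unbundling reverses this together with $O(1)$ additions per output coefficient, so each touches $O(N_0 \cdot Bs) = O(Ns)$ bits and contributes only $O(Ns)$, which is absorbed into (\ref{eq:o1}).

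For stages 2 and 4 I would invoke the standard product-tree/remainder-tree machinery. Each of the $\Theta(N_0) = \Theta(N/B)$ coefficients of $F(x)$ and $G(x)$ is an integer of $M := Bs$ bits that must be reduced modulo all $n$ primes; performing this by descending the remainder tree built over the $p_i$ costs $O(\mathsf{M}(M)\log n)$ bit operations per coefficient, where $\mathsf{M}(M) = O(M(\log M)^{1+\varepsilon})$ is the cost of $M$-bit integer multiplication and $\log n = O(\log(Bs)) = O(\log M)$. This gives $O(M(\log M)^{2+\varepsilon}) = O(Bs(\log(Bs))^{2+\varepsilon})$ per coefficient and hence $O((N/B)\cdot Bs(\log(Bs))^{2+\varepsilon}) = O(Ns(\log(Bs))^{2+\varepsilon})$ for stage 2. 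Stage 4 is symmetric: each of the $\Theta(N/B)$ coefficients of $H(x)$ has $(2B-1)s = \Theta(Bs)$ bits and is reconstructed from its $n$ residues by the analogous CRT algorithm at the same per-coefficient cost. Together these produce the first term of (\ref{eq:o1}).

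For stage 3 I would bound the $n = \Theta(Bs)$ independent polynomial products $H_{p_i}(x) = F_{p_i}(x)\times G_{p_i}(x)$, each carried out over $\mathbb{F}_{p_i}$ with the factors of degree $O(N_0) = O(N/B)$ and each coefficient a single machine word. Using the NTT, one such product takes $O(N_0\log N_0)$ field operations, i.e.\ $O((N/B)(\log(N/B))^{1+\varepsilon})$ bit operations once the $O(1)$-bit cost of arithmetic modulo a word-sized $p_i$ and the $\log\log$ factor of the transform are folded into the exponent $1+\varepsilon$. Multiplying by the number of primes gives $O(Bs\cdot(N/B)(\log(N/B))^{1+\varepsilon}) = O(Ns(\log(N/B))^{1+\varepsilon})$, the second term of (\ref{eq:o1}). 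Summing the three dominant stages and discarding the lower-order $O(Ns)$ from stages 1 and 5 yields exactly (\ref{eq:o1}).

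The main obstacle I expect is accounting for the two different logarithmic exponents correctly: the extra $\log$ in the first term comes from the depth $\log n = \Theta(\log(Bs))$ of the product/remainder tree multiplying the cost of one $\mathsf{M}(Bs)$-bit multiplication, whereas stage 3 sees only the single $\log$ of the NTT on degree-$(N/B)$ inputs. Keeping the two size parameters separate---the coefficient bit-length $Bs$ governing the CRT stages and the transform length $N/B$ governing the pointwise products---is what makes the bound split into the displayed form, and the subtlety is to justify that the primes really are of bounded size so that $n = \Theta(Bs)$ rather than something depending on $N$. A secondary point to state carefully is the precise model of ``bit operations'' for the finite-field arithmetic, so that the $\log\log$ terms arising from fast integer and polynomial multiplication are legitimately absorbed into the $\varepsilon$.
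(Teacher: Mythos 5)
Your proposal is correct and follows essentially the same route as the paper: the same five-stage decomposition, the remainder tree for multimodular reduction, FFT/NTT multiplication over the word-sized prime fields with $n = \Theta(Bs)$, and fast CRT reconstitution, yielding the two terms of (\ref{eq:o1}) from stages 2--4 with stages 1 and 5 absorbed. The only (harmless) difference is bookkeeping: you charge stage 4 the same $O\left(Ns\left(\log(Bs)\right)^{2+\varepsilon}\right)$ as stage 2, while the paper argues the CRT phase is strictly dominated by the reduction phase.
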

\begin{proof}
We analyze each of the five stages of the algorithm.  The computation
of bundled polynomials $F(x)$ and $G(x)$ in stage (1) consists of
sequential applications of logical shifts and ORs, and requires $O(N)$
bit operations. Each bundled polynomial has $N / B$ coefficients, so
the multimodular reduction phase (2) requires $N/B$ reductions modulo
$n$ primes $p_0, \ldots, p_{n-1}$. We use a \emph{remainder tree} to
reduce each coefficient $C$ of a bundled polynomial modulo $p_0,
\ldots, p_{n-1}$. This technique allows us to compute $C \bmod p_0, C
\bmod p_1, \ldots, C \bmod p_{n-1}$ in $O\left(t\left(\log
t\right)^{2+\varepsilon}\right)$ bit operations, where $t$ is the
total number of bits in $C, p_0, \ldots, p_{n-1}$ \cite[Section
  3]{borodin}. Since each coefficient of a bundled polynomial fits
into $Bs$ bits, we conclude that the multimodular reduction phase requires
$$
O\left(\frac{N}{B}\cdot t\left(\log t\right)^{2+\varepsilon}\right) = O\left(\frac{N}{B} \cdot Bs\left(\log (Bs)\right)^{2+\varepsilon}\right) = O\left(Ns\left(\log(Bs)\right)^{2+\varepsilon}\right).
$$
bit operations.

In stage (3), the multiplication of $n$ pairs of polynomials of degree
$N/B - 1$ is performed with the Sch\"onhage-Strassen algorithm
\cite[Sections 8.2 -- 8.4]{gathen}. This algorithm requires $O(N\log N
\log\log N)$ bit operations to multiply two polynomials of degree $N$.
Hence, stage (3) requires
$$
O\left(n\frac{N}{B}\log\frac{N}{B}\log\log\frac{N}{B}\right) = O\left(Bs\frac{N}{B}\log\frac{N}{B}\log\log\frac{N}{B}\right) = O\left(Ns\left(\log\frac{N}{B}\right)^{1+\varepsilon}\right)
$$
bit operations.

Finally, consider stages (4) and (5), i.e., the CRT reconstitution and
extraction of coefficients. Though the latter involves certain
sophisticated techniques, it simply iterates over all $N$ coefficients
of the resulting polynomial $H(x)$, and therefore requires $O(N)$ bit
operations. Now, consider the CRT reconstitution in stage (4). For the
CRT, we use the divide-and-conquer technique
\cite[Section~4]{hart}. For $n_1$ integer coefficients of size $n_2$
bits, this approach allows us to complete the restoration of a
coefficient in
$O\left(n_2\left(\log(n_1n_2)\right)^{2+\varepsilon}\right)$ bit
operations. In our case, $n_2$ is constant and $n_1 = n$, where $n$ is
the number of primes in use. In total, there are $N/B$ coefficients to
restore, which means that the number of bit operations required is in
$$
O\left(\frac{N}{B}(\log n)^{2+\varepsilon}\right) =
O\left(\frac{N}{B}\left(\log (Bs)\right)^{2+\varepsilon}\right).
$$

Since $s > \log^{-1}B$, the asymptotic running time of the
initialization phase dominates the running time for the CRT
reconstitution phase. Combining the costs for the initialization and
multiplication phases yields the result (\ref{eq:o1}).
\end{proof}

Note that the class number tabulation formulas (\ref{eq:2mod4}),
(\ref{eq:1mod4}) and (\ref{eq:3mod8}) require \emph{two} polynomial
multiplications. For example, in order to determine (\ref{eq:3mod8}),
we first have to perform the multiplication $\nabla^2(q) = \nabla(q)
\times \nabla(q)$, followed by the computation of $\nabla^3(q) =
\nabla^2(q) \times \nabla(q)$. In practice, we use a different
approach; that is, we initialize $\vartheta_3^2(q)$ (or $\nabla^2(q)$,
or $\nabla^2(q^2)$) \emph{directly}, which allows us to evaluate the
formula using one polynomial multiplication instead of two.

We describe the initialization mechanism for the example of
$\vartheta_3(q)$. A similar approach can be used to initialize
$\nabla(q)$ and $\nabla(q^2)$. We compute the first $N$ coefficients
of $\vartheta_3(q)$ block by block, using a certain \emph{partition
  size} $S$ dividing $N$. The initialization algorithm for the block of
$S$ coefficients from $M$ to $M + S - 1$ requires $O(\sqrt{M+S})$ bit
operations, as there are precisely $\left\lfloor \sqrt{M+S} -
\sqrt{S}\right\rfloor$ perfect squares between $M$ and $M+S-1$; we can
easily iterate over all of them within a single loop. Summing over
$N/S$ blocks, we obtain that the initialization of $N$ coefficients of
$\vartheta_3(q)$ requires $O(\sqrt S) + O(\sqrt{2S}) + \ldots +
O(\sqrt N) = O(N\sqrt N / S)$ bit operations. In order to achieve a
linear time for initialization, we choose $S = O(\sqrt N)$.

In turn, the initialization of a block of coefficients of
$\vartheta_3^2(q)$ requires two nested loops, which result in $O(S)$
bit operations. Summing $O(S) + O(2S) + \ldots + O(N) = O(N^2 / S)$,
we conclude that $S$ has to grow proportionally to $N/(\log N)^k$ for
some non-negative integer $k$ in order for $\vartheta_3^2(q)$ to be
initialized in linear or pseudo-linear time. Of course, this is
unreasonable. However, for small $N,$ initializing $\vartheta_3^2(q)$
directly works well in practice, even though it is worse
asymptotically than using two sequential polynomial
multiplications. 

We now state the asymptotic complexity of the complete class
number tabulation method without including the initialization costs of
$\vartheta_3^2(q)$, $\nabla^2(q)$ or $\nabla^2(q^2)$ mentioned
above. We obtain Corollary~\ref{cor:complexity2} by applying the
formula for $s$ in (\ref{eq:s}) to Theorem~\ref{thm:complexity1}.
\begin{cor} \label{cor:complexity2}
The class number tabulation algorithm requires
\begin{equation} \label{eq:o2}
O\left(N\log N (\log B)^{2+\varepsilon} + N\log N\left(\log\frac{N}{B}\right)^{1 + \varepsilon}\right)
\end{equation}
bit operations.
\end{cor}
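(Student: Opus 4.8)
The plan is to reduce the corollary to a direct substitution into Theorem~\ref{thm:complexity1}, the only genuine content being the determination of the order of magnitude of the bit size parameter $s$ and a careful comparison of the two summands in (\ref{eq:o1}). First I would establish that $s = O(\log N)$ in all three cases of (\ref{eq:s}). From (\ref{eq:CN}) we have $C_N = O(\sqrt{N}\log N)$, since $a$ and $b$ are constants, whence $\log_2 C_N = O(\log N)$; multiplying $C_N$ by the constants $1$, $2$, or $3$ and applying $\lceil\log_2(\cdot)\rceil$ changes this by at most an additive constant. Hence $s = O(\log N)$ irrespective of which of (\ref{eq:2mod4}), (\ref{eq:1mod4}), or (\ref{eq:3mod8}) is being computed.

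Substituting $s = O(\log N)$ into the bound (\ref{eq:o1}) of Theorem~\ref{thm:complexity1} yields
$$
O\left(N\log N\left(\log(B\log N)\right)^{2+\varepsilon} + N\log N\left(\log\frac{N}{B}\right)^{1+\varepsilon}\right),
$$
so the second summand already matches the second summand of (\ref{eq:o2}). It remains to show that the first summand is absorbed into (\ref{eq:o2}), i.e.\ that the factor $\log(B\log N)$ may be replaced by $\log B$.

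The key step, and the one requiring a little care, is handling the $\log\log N$ contribution hidden inside $\log(Bs) = \log B + \log\log N + O(1)$. I would split into two regimes. When $B \geq \log N$ we have $B\log N \leq B^2$, so $\log(B\log N) \leq 2\log B$ and the first summand is $O\!\left(N\log N(\log B)^{2+\varepsilon}\right)$ outright. When $B < \log N$ the factor $\log(B\log N)$ is only $O(\log\log N)$, but now $N/B > N/\log N$, so $\log(N/B) = \Omega(\log N)$ and the second summand is $\Omega\!\left(N(\log N)^{2+\varepsilon}\right)$, which dominates $N\log N(\log\log N)^{2+\varepsilon}$. In either regime the first summand is bounded by (\ref{eq:o2}), completing the argument.

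I expect the main obstacle to be precisely this last comparison: the naive substitution produces $\log(B\log N)$ rather than $\log B$, and one must argue that the excess $\log\log N$ is always dominated, either by $\log B$ itself when $B$ is large, or by the second summand when $B$ is small. Everything else is a routine order-of-magnitude estimate that follows once $s = O(\log N)$ has been pinned down.
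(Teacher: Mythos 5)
Your proposal is correct and follows the same route as the paper, whose entire justification is the single sentence that the corollary is obtained by applying the formula (\ref{eq:s}) for $s$ (so $s = O(\log N)$, since $C_N = O(\sqrt{N}\log N)$) to Theorem~\ref{thm:complexity1}. Your two-regime argument showing that the resulting factor $\log(Bs) = \log(B\log N)$ may be replaced by $\log B$ --- either because $B \geq \log N$ makes the two comparable, or because for $B < \log N$ the first summand is absorbed by the second --- is a detail the paper leaves implicit, and you handle it correctly.
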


In theory, all steps of the algorithm can be parallelized trivially,
yielding a speed-up of $T$ using $T$ threads --- see
\cite[Chapter~4]{mosunov1} for a complete description and analysis.
In practice, such optimal speedup is difficult to achieve due to the
cost of managing the threads and the assumption that all disk I/O is
being done in parallel, both reading \emph{and} writing.  Special hard
disks designed for large-scale parallel applications, such as those
used in our experiments, are necessary to get the most out of
parallelization.

The method described by Ramachandran
et.\ al.\ \cite{jacobson,ramachandran} has bit complexity
$O(|\Delta|^{1/4+\varepsilon})$ for each discriminant, and thus
$O(N^{5/4+\varepsilon})$ for all $|\Delta| < N,$ including the
ERH-verification step.  Our new algorithm computes the class numbers
asymptotically faster, but we can only use it for $\Delta \not \equiv
1 \pmod{8}$ and require a more expensive method for the remaining
congruence class.  In addition, Ramachandran's method also computes
the class group structures.  We describe our approach to this part of
the problem in the next section.

\section{Unconditional Class Group Tabulation} \label{sec:clgrp}

The class number tabulation technique, described in
Sections~\ref{sec:clnum} and \ref{sec:out-of-core}, allows us to
compute unconditionally all class numbers $h(\Delta)$ with $\Delta
\not \equiv 1 \pmod{8}$ and $|\Delta| < N$. To resolve the structure
of each class group $Cl_\Delta,$ we use the algorithm due to Buchmann,
Jacobson, and Teske (BJT) \cite[Algorithm 4.1]{buchmann}, suitable for
any generic group $G$. This algorithm iteratively builds up the set of
generators $\bm{\alpha}$ of $G$, and terminates whenever the size of
the subgroup $\langle \bm{\alpha} \rangle$ generated by $\bm{\alpha}$
matches $|G|$.

Note that tabulating class numbers for $\Delta \not \equiv 1
\pmod{8}$ has another major advantage, aside from the fact that we
were able to produce the size of each $Cl_\Delta$ unconditionally and
did not require an additional verification step. Given the
factorization of $h(\Delta) = p_1^{e_1}\cdot p_2^{e_2} \cdot \ldots
\cdot p_k^{e_k}$, we can ignore those primes $p_i$ for $1 \leq i
\leq k$ which have $e_i = 1$, as it means that the $p_i$-group of
$Cl_\Delta$ is guaranteed to be cyclic. We can therefore resolve
the structure of a smaller subgroup $G$ of $Cl_\Delta$, satisfying
$$
|G| = \prod\limits_{p_i^2 | h(\Delta)} p_i^{e_i}.
$$
In practice, $|G|$ was much smaller than $h(\Delta)$ frequently, so
this method worked very well.

For $\Delta \equiv 1 \pmod{8}$, where our tabulation method does not
produce any class numbers, we used the same method as in
\cite{jacobson}.  The Buchmann-Jacobson-Teske algorithm can still be
used to compute class groups without knowing the class numbers a
priori.  In this case, it is sufficient to provide a lower bound $h^*$
such that $h^* \leq h(\Delta) \leq 2h^*$ in order to be certain that
the whole group was generated --- once the size of the subgroup
$\langle \bm{\alpha} \rangle$ generated by $\bm{\alpha}$ exceeds
$h^*,$ we know that we have the entire group.

As described in \cite{jacobson}, the main issue with this approach is
that the best method to determine the lower bound $h^*$ requires the
ERH-dependent averaging method of Bach \cite{bach1}.  To eliminate the
ERH dependency, we again followed \cite{jacobson} and applied the
Eichler-Selberg trace formula. This formula gives an expression for
the trace of the Hecke operator $T_n$ acting on the space of cusp
forms $S_k(\Gamma_0(N),\chi).$ When applied to the case where $k=2,$
$N=1,$ and $\chi$ the trivial character, the trace formula reduces to
the following equality involving Hurwitz class numbers:
\begin{equation} \label{eq:verify}
H(4n) + 2\sum_{t = 1}^{\lfloor \sqrt{4n} \rfloor}H(4n-t^2) = 2\left(\sum_{\substack{d\,|\,n\\d \geq \sqrt n}} d\right) - \sigma(n)\sqrt n + \frac{1}{6}\chi(n),
\end{equation}
where $\sigma(n)$ is the indicator function, which is 1 whenever $n$
is a perfect square and 0 otherwise \cite[Formula 2.2]{jacobson}. Due to the
nature of the BJT algorithm, the size of the class group computed will
always divide $h(\Delta)$. Therefore, if one or more of our computed
class numbers are wrong, then (\ref{eq:verify}) will detect this
because the left hand side will be less than the right hand side.
Note that in our case the only class numbers $h(\Delta)$ that require
verification are those with $\Delta \equiv 1 \pmod{8}$, so it is
sufficient in our case to verify that the equality (\ref{eq:verify})
holds only for \emph{even} values of $n$.

One method to use (\ref{eq:verify}) to verify all $h(\Delta_1)$ with
$\Delta_1$ fundamental and $|\Delta_1| < N,$ as suggested in
\cite{jacobson}, is to first compute the smallest set of $n$ values
such that every fundamental discriminant $\Delta_1$ divides at least
one Hurwitz class number in the formula.  However, a more efficient
approach was later suggested by Ramachandran \cite{ramachandran},
based on simplifying the computation of (\ref{eq:verify}) for all
values of $n$ between $1$ and $\lfloor N / 4 \rfloor.$ 

Following Ramachandran \cite[Formulas~4.10, 4.11]{ramachandran}, but
adjusting for the fact that we only need to verify discriminants
congruent to $1 \bmod 8,$ we define two quantities, $LHS$ and $RHS$,
as follows:
\begin{equation} \label{eq:lhs}
LHS = \left(\sum_{\substack{\Delta \equiv 0 \pmod 8\\ |\Delta| \leq 8X}}H(|\Delta|)\right) + 2\left(\sum_{\substack{\Delta \equiv 0, 1 \pmod 4\\|\Delta| \leq 8X}}r(\Delta,X)H(|\Delta|)\right);
\end{equation}
\begin{equation} \label{eq:rhs}
RHS = \sum_{n = 1}^X\left(2\left(\sum_{\substack{d\,|\,2n\\d \geq \sqrt{2n}}} d\right) - \chi(2n)\sqrt{2n} + \frac{1}{6}\chi(2n)\right).
\end{equation}
Here, $r(\Delta, X)$ counts the number of solutions to the equation
$\Delta = t^2 - 8n$ for $1 \leq n \leq X$:
\begin{equation} \label{eq:r}
r(\Delta, X) = \begin{cases}
0, & \textnormal{if $\Delta \equiv 5$ (mod 8);}\\
\lfloor (Y+1)/2 \rfloor, & \textnormal{if $\Delta \equiv 1$ (mod 8);}\\
\lfloor (Y+2)/4 \rfloor, & \textnormal{if $\Delta \equiv 4$ (mod 8);}\\
\lfloor Y/4 \rfloor, & \textnormal{if $\Delta \equiv 0$ (mod 8)},\\
\end{cases}
\end{equation}
where $Y = \lfloor \sqrt{8X + \Delta} \rfloor$.  We computed both
$LHS$ and $RHS$ in parallel for $X = \lfloor N / 8 \rfloor$, where $N
= 2^{40}$.  The expression $LHS$ is evaluated using the table of class
numbers of fundamental discriminants computed using the BJT method;
see \cite[Algorithm~4.1]{ramachandran} for pseudocode.  Though
computationally more intensive, the calculation of the $RHS$ is more
straightforward and easily parallelizable. In order to compute the
divisors for each $n \leq X$, we use the formula (\ref{eq:rhs}) in
conjunction with a segmented sieve.

\section{Performance} \label{sec:performance}

For the class number tabulation using out-of-core polynomial
multiplication, we used the FLINT library for number theory,
maintained by Hart \cite{flint}. In particular, we used the
\texttt{nmod\_poly\_mullow} routine for polynomial multiplication in
$\mathbb F_p[x]$. The FLINT library also contains subroutines
for fast reduction modulo primes $p_0, \ldots, p_{n-1}$ and CRT
reconstitution, respectively. We used OpenMP for parallelization.

For the class group computation, we used Sayles's libraries
\texttt{optarith} and \texttt{qform}, which contain fast
implementations of binary quadratic form arithmetic
\cite{sayles1,sayles2}, including implementations targeted to
machine-size operands that avoid multi-precision integer arithmetic.
We also use Message Passing Interface (MPI) for parallelization. The
source code for our program can be found in \cite{mosunov2}.

Our computations were performed on WestGrid's supercomputer Hungabee,
located at the University of Alberta, Canada \cite{westgrid}. Hungabee
is a 16 TB shared memory system with 2048 Intel Xeon cores, 2.67GHz
each. Each user of Hungabee may request at most 8 GB of memory per
core. Also, Hungabee provides a high performance 53 TB storage space,
which allows to write to multiple disks in parallel. Note that the
fast disk I/O requirement is essential for the high performance of our
program.

We first discuss our class number tabulation program. We performed
three polynomial multiplications, described in formulas
(\ref{eq:2mod4}), (\ref{eq:1mod4}) and (\ref{eq:3mod8}). After running
several tests, we determined that Hungabee can comfortably multiply
polynomials of $2^{25}$ coefficients without requiring additional
memory. This observation allowed us to make the proper choice of a
bundling parameter $B$. 

Table \ref{tab:parameters} contains the list of parameters which we
used for our computations, and the amount of disk space needed to
store intermediate computations required for the polynomial
multiplication.
\begin{table}[htb]
\centering
\caption{Computational parameters}
\label{tab:parameters}
\begin{tabular}{| c | c || c | c | c | c | c | c |}
\hline
Formula & $\Delta$ & $N$ & $B$ & $C$ & $s$ & $n$ & Disk space\\
\hline
\hline
$\nabla^2(q^2)\cdot \vartheta_3(q)$ & 8 (mod 16) & $2^{36}$ & $2^{11}$ & 11199314 & 24 & 1586 & 859 GB\\
\hline
$\vartheta_3^2(q) \cdot \nabla(q^2)$ & 12 (mod 16) & $2^{36}$ & $2^{11}$ & 11199314 & 25 & 1652 & 893.4 GB\\
\hline
$\nabla^2(q) \cdot \nabla(q)$ & 5 (mod 8) & $2^{37}$ & $2^{12}$ & 21381515 & 26 & 3435 & 1855 GB\\
\hline
\end{tabular}
\end{table}
Here, $C$ is the bound on $H(|\Delta|)$ defined in (\ref{eq:CN}), $s$
is the bit size parameter (\ref{eq:s}), and $n$ is the number of
63-bit primes required for correct CRT reconstitution
(\ref{eq:n}). For each multiplication, we requested 64 processors and
8 GB of memory per core. The number of files was chosen to be $m =
4096$.  Table \ref{tab:clnum_timings} lists timings for each of the
three class number tabulation algorithms.
\begin{table}[htb]
\centering
\small
\caption{Timings for the class number tabulation program}
\label{tab:clnum_timings}
\begin{tabular}{| c | c || c | c |}
\hline
$f(x)$ & $g(x)$ & CPU time & Real time\\
\hline
\hline
$\nabla^2(q^2)$ & $\vartheta_3(q)$ & 23d 11h 10m 56s & 8h 47m 59s\\
\hline
$\vartheta_3^2(q)$ & $\nabla(q^2)$ & 29d 21h 2m 56s & 11h 12m 14s\\
\hline
$\nabla^2(q)$ & $\nabla(q)$ & 68d 5h 8m 16s & 25h 34m 49s\\
\hline
\end{tabular}
\end{table}

Table~\ref{tab:timings_class_group} contains timings for computing the
class group structures.
As expected, for $\Delta \equiv 1 \pmod{8}$ our program takes
significantly more time, since Ramachandran's approach requires the
computation of the whole group. If we assume that all $\Delta$ were
handled using solely Ramachandran's technique, then 64 processors
would complete the (conditional) tabulation to $2^{40}$ in 80d 11h 9m
48s, as opposed to 31d 22h 45m 8s (counting the class number
tabulation and the verification). Note that 81.13\% of time in our
computations was spent on the computation of $Cl_\Delta$ for $\Delta
\equiv 1 \pmod{8}$ and the verification of the result.

\begin{table}[!ht]
\centering
\caption{Timings for the class group tabulation program}
\label{tab:timings_class_group}
\begin{tabular}{| c || c | c | c |}
\hline
$\Delta$ & CPU time & Real time & \# processors\\
\hline
$\Delta \not \equiv 1$ (mod 8) & 267d 4h 31m 40s & 4d 3h 26m 44s & 64\\
\hline
$\Delta \equiv 1$ (mod 8) & 1657d 22h 12m 6s & 39h 28m 27s & 1008\\
\hline
\end{tabular}
\end{table}

We also observe that the structures of $Cl_\Delta$ for all congruence
classes with $\Delta \not \equiv 1 \pmod{8}$ were computed over 6.25
times faster than those with $\Delta \equiv 1 \pmod{8}.$ If we include
the verification cost for the $1 \bmod 8$ case and the class number
tabulation for the rest, then the entire computation for all $\Delta
\not \equiv 1 \pmod{8}$ is roughly 4.72 times faster than that for $1
\bmod 8.$ Such a significant speedup occurs due to the fact that in
57.34\% of the cases $h(\Delta)$ had a square-free factor exceeding
$\sqrt{h(\Delta)}$, which means that the size of the subgroup that we
had to resolve was small relative to the size of the group
itself. Moreover, in 1.67\% of the cases $h(\Delta)$ were square-free,
which means that no resolution of class groups was needed at all.  In
general, over 85.13\% of class numbers $h(\Delta)$ possessed a
square-free part larger than 1. In Table~\ref{tab:count_h}, we present
the counts of class numbers up to $2^{40}$ with various divisibility
properties. In particular, column 3 counts class numbers with
square-free part greater than 1, column 4 counts $h(\Delta)$ with
square-free part exceeding $\sqrt{h(\Delta)}$, and column 5 counts
class numbers that are square-free. Our data is separated into four
congruence classes.
\begin{table}[h]
\centering
\caption{Counts of $h(\Delta)$ with various divisibility properties}
\begin{tabular}{| c || r | r | r | r |}
\hline
$\Delta$ & \multicolumn{1}{c|}{Total $\Delta$} & \multicolumn{1}{c|}{$p\,|\,h,\,p^2\,\nmid\,h$} & \multicolumn{1}{c|}{$h=g^2e,\,e > \sqrt{h}$} & \multicolumn{1}{c|}{square-free $h$}\\
\hline
\hline
8 (mod 16) & 55701909754 & 47077629143 & 32012088117 & 941347842\\
\hline
12 (mod 16) & 55701909855 & 47091713960 & 31927265003 & 915383075\\ 
\hline
5 (mod 8) & 111403819688 & 95517292502 & 63828635213 & 8828052571\\
\hline
1 (mod 8) & 111403819373 & 94502061670 & 55851403024 & 7295483368\\
\hline
\end{tabular} \label{tab:count_h}
\end{table}

Note that the counts for $\Delta \equiv 1 \pmod{8}$ were not included
in the percentages listed above.  The counts are similar to the other
congruence classes, but divisibility properties of the class number
played no role in the computation for the $1 \bmod 8$ case as the
class numbers were not computed first.  It should be emphasized that
the rapid computation of all class numbers using theta-series is what
allowed us to take advantage of these properties when resolving the
group structures.

Finally, we compare the performance of our program to the
implementation of Ramanchandran \cite{ramachandran}, and the
\texttt{quadclassunit0} routine of the PARI/GP library
\cite{pari}. For the class group resolution, the latter implementation
uses Hafner and McCurley's subexponential index calculus algorithm
\cite{mccurley,hafner}. For each implementation, we used a single
Intel Xeon 2.27GHz processor to compute $Cl_\Delta$ for every
fundamental $\Delta < 0$ such that $|\Delta|$ lies in the interval
from $2^{39}$ to $2^{39} + 2^{20}$.  For this computation, we used the
ERH-dependent version of the BJT algorithm for our implementation and
that of Ramachandran (i.e., no prior class number tabulation nor
verification in either case).  The resulting timings are listed in
Table~\ref{tab:comparison}.
\begin{table}[htb]
\centering
\caption{Timings for various class group tabulation implementations}
\label{tab:comparison}
\begin{tabular}{| c | c | c || c | c | c |}
\hline
$|\Delta_{min}|$ & $|\Delta_{max}|$ & Total $\Delta$ & Our program & \cite{ramachandran} & PARI/GP\\
\hline
\hline
$2^{39}$ & $2^{39} + 2^{20}$ & 318729 & 438s & 736s & 1181s\\
\hline
\end{tabular}
\end{table}

All three implementations yield correct results under the assumption
of the ERH. Though our program and Ramachandran's implementation
implementation use the same algorithm, 
it significantly outperforms the latter. We
believe that the optimized binary quadratic form arithmetic in Sayles's
libraries \texttt{optarith} and \texttt{qform} \cite{sayles1,sayles2}
used in our program accounts for the improvement.  

Note that asymptotically the Hafner-McCurley algorithm (with
subexponential complexity in $\log |\Delta|$) is superior to the BJT
algorithm (exponential complexity).  Thus, although it should be faster for a
sufficiently large discriminant bound, our results show that the bound
$2^{40}$ is still below the crossover point.

\section{Numerical Results} \label{sec:results}

\subsection{Bounds on $L(1,\chi_\Delta)$}
In 1928, Littlewood \cite{littlewood} demonstrated that, assuming the
ERH,
\begin{equation} \label{eq:littlewood}
(1 + o(1))(c_1\log\log|\Delta|)^{-1} < L(1,\chi_\Delta) < (1 + o(1)) c_2\log\log|\Delta|,
\end{equation}
where
$$
c_1 = \frac{12e^\gamma}{\pi^2}\,\,\,\textnormal{and}\,\,\,c_2=2e^\gamma\,\,\,\textnormal{when}\,\,\,2\,\nmid\,\Delta;
$$
$$
c_1 = \frac{8e^\gamma}{\pi^2}\,\,\,\textnormal{and}\,\,\,c_2=e^\gamma\,\,\,\textnormal{when}\,\,\,2\,|\,\Delta,
$$
and $\gamma \approx 0.57722$ is the Euler-Mascheroni constant. Later,
Shanks studied these bounds more carefully by defining two quantities,
$$
ULI = \frac{L(1,\chi_\Delta)}{c_2\log\log|\Delta|}\,\,\,\textnormal{and}\,\,\,LLI=L(1,\chi_\Delta)c_1\log\log|\Delta|,
$$
and ignoring $o(1)$ term in Littlewood's estimates
\cite{shanks}. These quantities allow us to test whether Littlewood's
bounds are violated, for if the ERH does not hold, then for large
$|\Delta|$ we might find $ULI > 1$ or $LLI < 1$. Note that there
\emph{are} small $\Delta$ such that $ULI > 1$ or $LLI < 1$, namely
$\Delta = -3, -4, -163$. We assume that values of $ULI$ and $LLI$ for
these discriminants are largely influenced by $o(1)$ terms. Aside from
$\Delta = -3, -4, -163$, we did not find any occurrences of
discriminants which violate Littlewood's bounds. Ignoring these
$\Delta$, the largest $ULI \approx 0.85183$ corresponds to $\Delta =
-8$, and the smallest $LLI \approx 1.00944$ corresponds to $\Delta =
-232$.

In addition to Littlewood's bounds, we also studied the growth of
$L(1, \chi_\Delta).$ In Table~\ref{tab:L_max}, we list successive
maximas of $L(1, \chi_\Delta)$ which did not occur in Table 5.3 of
\cite{ramachandran}. The last discriminant found was $\Delta =
-685122125399$, which corresponds to the largest $L(1, \chi_\Delta)
\approx 8.47178$ with $|\Delta| < 2^{40}$. As for successive minimas
of $L(1, \chi_\Delta)$, no new discoveries were made. The smallest
$L(1, \chi_\Delta) \approx 0.17070$ corresponds to $\Delta =
-107415709003$.
\begin{table}[htb]
\centering
\caption{Successive $L(1,\chi_\Delta)$ maxima}
\label{tab:L_max}
\begin{tabular}{| c || c | c |}
\hline
$|\Delta|$	& $L(1,\chi_\Delta)$	& $ULI$\\
\hline
\hline
210015218111	& 8.26604	& 0.71164\\
\hline
332323080311	& 8.30989	& 0.71161\\
\hline
503494619759	& 8.31253	& 0.70848\\
\hline
603231310919	& 8.32466	& 0.70807\\
\hline
685122125399	& 8.47178	& 0.71957\\
\hline
\end{tabular}
\end{table}

\subsection{The Cohen-Lenstra Heuristics}
In 1984, Cohen and Lenstra presented several powerful heuristics on
the structure of the odd part of the class group $Cl_\Delta$
\cite{cohen2}. The odd part $Cl_\Delta^*$ is simply the largest
subgroup of $Cl_\Delta$ with an \emph{odd} cardinality.
\begin{conj}[{\cite[C1]{cohen2}}]
Define
$$
\eta_k(l) = \prod\limits_{i = 1}^k\left(1 - \frac{1}{l^i}\right)\,\,\,\textnormal{and}\,\,\,C_\infty = \prod\limits_{i = 2}^\infty\zeta(i) \approx 2.294856589,
$$
where $\zeta(s)$ denotes the Riemann zeta function. For $\Delta < 0$,
the probability that the odd part of the class group $Cl_\Delta$ is
cyclic is
\begin{equation} \label{eq:pr_cyclic}
\textnormal{Pr($Cl_\Delta^*$ is cyclic)} = \frac{315\zeta(3)}{6\pi^4\eta_\infty(2)C_\infty} \approx 0.977575.
\end{equation}
\end{conj}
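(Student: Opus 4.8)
The plan is to treat this statement not as an arithmetic theorem about a particular family of fields but as a density computation inside the Cohen--Lenstra probabilistic model, deriving the closed form (\ref{eq:pr_cyclic}) from the fundamental heuristic assumption. That assumption asserts two things: first, that for each odd prime $p$ the $p$-Sylow subgroup of $Cl_\Delta$ is distributed over isomorphism classes of finite abelian $p$-groups $A$ with weight proportional to $1/|\mathrm{Aut}(A)|$; and second, that these $p$-parts behave independently as $p$ ranges over the odd primes. Using the classical identity $\sum_A 1/|\mathrm{Aut}(A)| = \prod_{i \ge 1}(1-p^{-i})^{-1} = \eta_\infty(p)^{-1}$, in which the sum runs over all finite abelian $p$-groups (parametrised by partitions) and $\eta_\infty(p) = \prod_{i\ge 1}(1-p^{-i})$ is the limit of the $\eta_k(p)$ defined in the statement, the normalising constant is exactly $\eta_\infty(p)$, so a fixed $A$ occurs with probability $\eta_\infty(p)/|\mathrm{Aut}(A)|$. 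The problem then reduces to computing, for each odd $p$, the probability $P_p$ that the $p$-Sylow subgroup is cyclic, and multiplying these over all odd primes.

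Next I would evaluate $P_p$. The cyclic $p$-groups are $\Z/p^n\Z$ for $n \ge 0$, with $|\mathrm{Aut}(\Z/p^n\Z)| = p^{n-1}(p-1)$ for $n \ge 1$ and $1$ for $n = 0$. Summing the resulting geometric series gives $\sum_{n \ge 0} 1/|\mathrm{Aut}(\Z/p^n\Z)| = 1 + p/(p-1)^2$, so that $P_p = \eta_\infty(p)\,(p^2-p+1)/(p-1)^2$. The key simplification is to peel off the first two factors of $\eta_\infty(p)$, namely $(1-p^{-1})(1-p^{-2}) = (p-1)^2(p+1)/p^3$, and to use the factorisation $p^3+1 = (p+1)(p^2-p+1)$. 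These combine to cancel the denominator $(p-1)^2$ cleanly and yield the compact form
\[
P_p = (1+p^{-3})\prod_{k=3}^\infty (1-p^{-k}).
\]

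The final step assembles the Euler product $\prod_{p\textnormal{ odd}} P_p$. I would split it as $\prod_{p\textnormal{ odd}}(1+p^{-3})$ times $\prod_{k \ge 3}\prod_{p\textnormal{ odd}}(1-p^{-k})$, interchanging the order of the double product in the second factor. Each inner product is standard: $\prod_p (1-p^{-k})^{-1} = \zeta(k)$ and $\prod_p (1+p^{-3}) = \zeta(3)/\zeta(6)$, and I would then strip out the single $p=2$ factor from each to pass to odd primes only. Recognising $\prod_{k \ge 3}\zeta(k) = C_\infty/\zeta(2)$ and $\prod_{k \ge 3}(1-2^{-k}) = (8/3)\,\eta_\infty(2)$, and finally substituting $\zeta(2)=\pi^2/6$ and $\zeta(6)=\pi^6/945$, collapses everything to $315\zeta(3)/(6\pi^4\eta_\infty(2)C_\infty)$, which is (\ref{eq:pr_cyclic}); a direct numerical check confirms the value $\approx 0.977575$.

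The main obstacle is conceptual rather than computational: the fundamental Cohen--Lenstra assumption is itself a heuristic and cannot be proved, so what this argument actually establishes is the \emph{evaluation} of the cyclicity probability predicted by the model, not an unconditional statement about class groups. On the technical side, the one point requiring genuine care is the handling of the nested infinite products --- justifying the interchange of the order of multiplication, verifying absolute convergence of $\prod_{p\textnormal{ odd}} P_p$ (which follows since $1 - P_p = O(p^{-4})$), and keeping scrupulous track of the excised $p=2$ contributions, since a single misplaced factor of $2$ would corrupt the final constant.
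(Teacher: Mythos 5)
Your derivation is correct, but note that the paper offers no proof of this statement at all: it is quoted verbatim as Conjecture C1 of Cohen--Lenstra, and the paper's contribution is purely empirical (the function $c(x)$ and Table~\ref{tab:CL_non_cyclic} testing convergence to $1$). What you have supplied is the standard computation, going back to Cohen--Lenstra themselves, that extracts the closed-form constant from the heuristic weighting $\Pr(A) \propto 1/|\mathrm{Aut}(A)|$ together with independence across odd primes. I checked the algebra: $\sum_{n\ge 0} 1/|\mathrm{Aut}(\Z/p^n\Z)| = 1 + p/(p-1)^2 = (p^2-p+1)/(p-1)^2$, the cancellation against $(1-p^{-1})(1-p^{-2}) = (p-1)^2(p+1)/p^3$ via $p^3+1=(p+1)(p^2-p+1)$ does give $P_p = (1+p^{-3})\prod_{k\ge 3}(1-p^{-k})$, and the assembled Euler product is
\[
\frac{8}{9}\cdot\frac{\zeta(3)}{\zeta(6)}\cdot\frac{\zeta(2)}{C_\infty}\cdot\frac{3}{8\,\eta_\infty(2)}
=\frac{105\,\zeta(3)}{2\pi^4\,\eta_\infty(2)\,C_\infty}
=\frac{315\,\zeta(3)}{6\pi^4\,\eta_\infty(2)\,C_\infty}\approx 0.977575,
\]
matching (\ref{eq:pr_cyclic}). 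You are also right to flag, as you do, that this is an evaluation of the model's prediction rather than a proof about class groups; that caveat is exactly why the paper labels the statement a conjecture and tests it numerically rather than proving it. The only minor point worth tightening is the justification for rearranging $\prod_{p}\prod_{k}$ into $\prod_{k}\prod_{p}$, which follows from absolute convergence of $\sum_{p,k} p^{-k}$ over odd $p$ and $k\ge 3$, as you indicate.
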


\begin{conj}[{\cite[C2]{cohen2}}]  
Let $l$ be an odd prime. For $\Delta < 0$, the probability that $l$ divides $h(\Delta)$ is
\begin{equation} \label{eq:l_divides_h}
\textnormal{Pr($l\,|\,h(\Delta)$)} =1 - \eta_\infty(l).
\end{equation}
\end{conj}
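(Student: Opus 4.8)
The statement is a \emph{heuristic} rather than a theorem, so by ``proof'' I mean a derivation of the closed form $1 - \eta_\infty(l)$ from the fundamental Cohen-Lenstra weighting principle, which I would take as the governing axiom. That principle asserts that, as $\Delta$ ranges over negative fundamental discriminants, the $l$-Sylow subgroup of $Cl_\Delta$ is isomorphic to a fixed finite abelian $l$-group $G$ with relative frequency proportional to $1/|\operatorname{Aut}(G)|$; this is the measure appropriate to imaginary quadratic fields. The plan is to promote this proportionality to an honest probability distribution by computing its normalizing constant, after which the event $l \nmid h(\Delta)$ corresponds \emph{exactly} to the $l$-part being trivial, and the desired probability is its complement.

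The first and only substantive step is to establish the combinatorial identity
$$
\sum_{G} \frac{1}{|\operatorname{Aut}(G)|} = \prod_{i=1}^\infty \bigl(1 - l^{-i}\bigr)^{-1} = \eta_\infty(l)^{-1},
$$
where the sum runs over all isomorphism classes of finite abelian $l$-groups. I would parametrize such $G$ by partitions $\lambda$, insert the standard closed form for the order of $\operatorname{Aut}(G_\lambda)$ in terms of $\lambda$, and recognize the resulting sum over partitions as the Euler product for the partition generating function evaluated at $q = l^{-1}$. Agreement can be checked either through a Hall-type recursion on the parts of $\lambda$ or simply by citing the identity in the form given by Cohen and Lenstra \cite{cohen2}.

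With the total mass $\eta_\infty(l)^{-1}$ in hand, normalization produces the measure
$$
\textnormal{Pr}\bigl(\textnormal{$l$-Sylow} \cong G\bigr) = \frac{\eta_\infty(l)}{|\operatorname{Aut}(G)|}.
$$
Specializing to the trivial group, whose automorphism group has order one, gives $\textnormal{Pr}(l \nmid h(\Delta)) = \eta_\infty(l)$, and passing to the complementary event yields $\textnormal{Pr}(l \mid h(\Delta)) = 1 - \eta_\infty(l)$, which is the claim. Everything after the identity is a single line of bookkeeping.

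I expect the genuine obstacle to live entirely in the automorphism sum: one must know the exact order of $\operatorname{Aut}(G_\lambda)$ for an arbitrary abelian $l$-group and then correctly resum over all $\lambda$ to the infinite product $\prod_i(1 - l^{-i})^{-1}$, including a convergence check. Beyond this, I should emphasize that the true unproved content is the very first sentence --- the passage from relative frequency among discriminants to a well-defined limiting probability weighted by $1/|\operatorname{Aut}(G)|$. That assumption is precisely what the tabulation in this paper is designed to test numerically rather than to establish, so any ``proof'' here is conditional on the Cohen-Lenstra philosophy itself.
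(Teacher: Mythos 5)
The paper does not prove this statement at all: it is quoted as Conjecture C2 of Cohen and Lenstra, and the paper's only contribution to it is empirical, namely the statistic $p_l(x)$ and the associated counts of class numbers divisible by $l$. So there is no ``paper proof'' to compare against, and the honest assessment is of your derivation on its own terms. As a heuristic derivation it is correct and is essentially the one in Cohen and Lenstra's original paper: for imaginary quadratic fields the governing weight on the $l$-Sylow subgroup is indeed $1/|\operatorname{Aut}(G)|$ (not the $1/(|G|\cdot|\operatorname{Aut}(G)|)$ weight of the real case, so you chose the right measure); the normalizing identity $\sum_{G} 1/|\operatorname{Aut}(G)| = \prod_{i\geq 1}\left(1 - l^{-i}\right)^{-1} = \eta_\infty(l)^{-1}$, summed over isomorphism classes of finite abelian $l$-groups, is a classical fact going back to Philip Hall and can be verified by the partition computation you sketch; and the event $l \mid h(\Delta)$ is exactly the event that the $l$-Sylow subgroup is nontrivial, so specializing the normalized measure to the trivial group and taking complements gives $1 - \eta_\infty(l)$. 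You are also right to locate the genuinely unproved content in the weighting axiom itself rather than in the bookkeeping; that is precisely why the statement is labelled a conjecture here and why the paper tests it numerically (observing $p_l(x) \to 1$) rather than attempting a proof.
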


\begin{conj}[{\cite[C5]{cohen2}}]
Let $l$ be an odd prime. For $\Delta < 0$, the probability that the
$l$-rank of $Cl_\Delta$ is equal to $r$ is
\begin{equation} \label{eq:p-rank_is_r}
\textnormal{Pr($l$-rank = $r$)} = \frac{\eta_\infty(l)}{l^{r^2}\eta_r(l)^2}.
\end{equation}
\end{conj}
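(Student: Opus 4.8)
The statement is a density assertion about genuine imaginary quadratic class groups, so I read $\mathrm{Pr}(l\text{-rank} = r)$ as the natural density
\[
P(r) = \lim_{X \to \infty} \frac{\#\{\Delta \text{ fundamental} : -X < \Delta < 0,\ \mathrm{rank}_l Cl_\Delta = r\}}{\#\{\Delta \text{ fundamental} : -X < \Delta < 0\}},
\]
and the plan is to pin this density down by the \emph{method of moments}. The key observation is that $\#\mathrm{Sur}(Cl_\Delta, (\Z/l\Z)^k) = \prod_{j=0}^{k-1}(l^{r} - l^{j})$ depends only on $r = \mathrm{rank}_l Cl_\Delta$, so, writing $N(X)$ for the number of fundamental discriminants of absolute value below $X$, the averages
\[
M_k(X) = \frac{1}{N(X)}\sum_{\substack{\Delta \text{ fundamental}\\ |\Delta| < X}} \#\mathrm{Sur}\bigl(Cl_\Delta,\,(\Z/l\Z)^k\bigr)
\]
are a triangular linear transform of the rank-probabilities $P(0), P(1), \dots$; hence convergence of every $M_k(X)$ to a prescribed limit forces the entire distribution.

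The Cohen--Lenstra prediction is the clean assertion that each limiting moment equals $1$, i.e.\ that the average number of surjections of $Cl_\Delta$ onto a fixed finite abelian $l$-group is exactly $1$, and the arithmetic heart of the plan is to prove this. By class field theory a surjection $Cl_\Delta \twoheadrightarrow (\Z/l\Z)^k$ corresponds to an unramified $(\Z/l\Z)^k$-extension of $\Q(\sqrt{\Delta})$, so $M_k(X)$ is, up to normalization, a count of number fields of bounded discriminant with prescribed Galois structure. For $k = 1$ and $l = 3$ this is precisely the Davenport--Heilbronn theorem on the average size of the $3$-torsion, proved by counting cubic fields via binary cubic forms; the general case would require counting the relevant degree-$l^k$ extensions, for instance through an analogous parametrization or through the reflection and Galois-cohomology bookkeeping that transports these counts between congruence classes of $\Delta$.

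Granting all the moments, the last step is purely formal: I would invert the triangular system, which is the $q$-series identity expressing $P(r)$ in terms of the $M_k$, and verify that the unique solution with every moment equal to $1$ is $\eta_\infty(l)/(l^{r^2}\eta_r(l)^2)$; determinacy of this moment problem holds because the prescribed moments grow slowly enough to single out one distribution. The main obstacle is unmistakably the arithmetic input. Only the first moment for $l = 3$ is known unconditionally, a handful of higher moments are reachable by the methods of Bhargava and his collaborators, and the function-field analogue has been established in a large-$q$ limit, but controlling \emph{every} moment uniformly in $k$ and in the congruence class of $\Delta$ --- which is exactly what the density statement demands --- lies beyond current techniques. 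This is why (\ref{eq:p-rank_is_r}) remains a conjecture, and it is the gap that the present tabulation addresses empirically.
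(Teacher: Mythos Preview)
The statement you were asked to prove is labeled \emph{Conjecture} in the paper, not \emph{Theorem}: it is a restatement of Cohen and Lenstra's heuristic C5, and the paper offers no proof of it whatsoever. The paper's contribution regarding this statement is purely empirical --- it tabulates class groups to $|\Delta| < 2^{40}$, computes the ratios $p_{l,r}(x)$, and presents tables and plots showing that these ratios drift toward $1$. There is no argument in the paper to compare your proposal against.

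To your credit, your write-up arrives at the correct conclusion: you sketch the method-of-moments reduction and the Davenport--Heilbronn input for the first $3$-moment, and then you explicitly concede that the required control of all moments ``lies beyond current techniques'' and that this ``is why (\ref{eq:p-rank_is_r}) remains a conjecture.'' That final paragraph is accurate and is the honest answer. The preceding material is a reasonable expository outline of how one \emph{expects} such a density statement would eventually be attacked, but it is not a proof, and you should not present it as one. If the assignment was to supply a proof, the right response is simply that none exists; if it was to discuss the status of the conjecture, your last paragraph already does that, and the earlier moment-method scaffolding could be trimmed or clearly flagged as heuristic.
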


In order to study these conjectures, we follow the approach of
Jacobson et al.\ and introduce three functions: $c(x)$, $p_l(x)$ and
$p_{l,r}(x)$ \cite[Section 3.2]{jacobson}:
$$
c(x) = \frac{\textnormal{\# of $Cl_\Delta^*$ cyclic with $|\Delta| < x$}}{\textnormal{\# of $\Delta$ with $|\Delta| < x$}}\,\,\,/\,\,\,\textnormal{Pr($Cl_\Delta^*$ is cyclic)};
$$
$$
p_l(x) = \frac{\textnormal{\# of $h(\Delta)$ divisible by $l$ with $|\Delta| < x$}}{\textnormal{\# of $\Delta$ with $|\Delta| < x$}}\,\,\,/\,\,\,\textnormal{Pr($l\,|\,h(\Delta)$)};
$$
$$
p_{l,r}(x) = \frac{\textnormal{\# of $Cl_\Delta$ with $l$-rank = $r$ and $|\Delta| < x$}}{\textnormal{\# of $\Delta$ with $|\Delta| < x$}}\,\,\,/\,\,\,\textnormal{Pr($l$-rank = $r$)}.
$$

If the Cohen-Lenstra heuristics hold, we would expect each of those
functions to approach 1 as $x$ grows. We observe this behavior in
Figures \ref{fig:pl} and \ref{fig:plr}, which plot $p_l(x)$ and
$p_{l,2}(x)$ for $l = 3, 5, 7$, respectively. The values of $c(x)$, as
well as the counts of non-cyclic $Cl_\Delta^*$, are presented in
Table~\ref{tab:CL_non_cyclic}. Note that our counts differ from the
ones given in \cite[Table~3]{jacobson}. For example, the total number
of non-cyclic $Cl_\Delta^*$ for $|\Delta| < 10^{11}$ given in
\cite[Table~3]{jacobson} is $603101904$, whereas our count in
Table~\ref{tab:CL_non_cyclic} suggests that this number is
$636501087$. In general, our counts are over 1.044 times larger than
the counts given in \cite[Table~3]{jacobson}; this ratio grows with
$x$ and reaches $1.055$ for $x = 10^{11}$. We argue that values in
Table~\ref{tab:CL_non_cyclic} are correct, because the output of
algorithms for small $N < 10^{9}$ matches that of PARI/GP
\cite{pari}. Finally, in Tables~\ref{tab:CL_pl1} and \ref{tab:CL_plr}
we count the total number of $h(\Delta)$ divisible by a prime $l$, and
class groups with a certain $l$-rank.

\input{plot_CL_pl.tex}

\input{plot_CL_plr.tex}

\begin{table}[htb]
\centering
\caption{Number of noncyclic odd parts of class groups}
\label{tab:CL_non_cyclic}
\begin{tabular}{| r || r | r | c | c |}
\hline
\multicolumn{1}{|c||}{$x$}	& \multicolumn{1}{c|}{Total}	& \multicolumn{1}{c|}{Non-cyclic}	& Percent	& $c(x)$\\
\hline
\hline
$10^{11}$	& 30396355052	& 636501087	& 2.09400	& 1.00152\\
\hline
$2\cdot 10^{11}$	& 60792710179	& 1283029629	& 2.11050	& 1.00135\\
\hline
$3\cdot 10^{11}$	& 91189065248	& 1932535723	& 2.11926	& 1.00126\\
\hline
$4\cdot 10^{11}$	& 121585420327	& 2583844783	& 2.12513	& 1.00120\\
\hline
$5\cdot 10^{11}$	& 151981775550	& 3236429002	& 2.12948	& 1.00116\\
\hline
$6\cdot 10^{11}$	& 182378130683	& 3889995513	& 2.13293	& 1.00112\\
\hline
$7\cdot 10^{11}$	& 212774486110	& 4544337515	& 2.13575	& 1.00109\\
\hline
$8\cdot 10^{11}$	& 243170840635	& 5199342505	& 2.13814	& 1.00107\\
\hline
$9\cdot 10^{11}$	& 273567195607	& 5854902775	& 2.14021	& 1.00105\\
\hline
$10^{12}$	& 303963550712	& 6510933430	& 2.14201	& 1.00103\\
\hline
$2^{40}$	& 334211458670	& 7164219493	& 2.14362	& 1.00101\\
\hline
\end{tabular}
\end{table}

\begin{table}[htb]
\centering
\caption{Counts of class numbers divisible by $l$}
\label{tab:CL_pl1}
\begin{tabular}{| r || r | r | r | r | r |}
\hline
\multicolumn{1}{|c||}{$x$}	& \multicolumn{1}{c|}{$3\,|\,h$}	& \multicolumn{1}{c|}{$5\,|\,h$}	& \multicolumn{1}{c|}{$7\,|\,h$}	& \multicolumn{1}{c|}{$11\,|\,h$}	& \multicolumn{1}{c|}{$13\,|\,h$}\\
\hline
\hline
$10^{11}$	& 13206088529	& 7271547905	& 4956628127	& 3011896994	& 2516050182\\
\hline
$2\cdot 10^{11}$	& 26447989308	& 14547903930	& 9914941601	& 6025009729	& 5032948550\\
\hline
$3\cdot 10^{11}$	& 39700741936	& 21825546084	& 14873726078	& 9038458883	& 7550137579\\
\hline
$4\cdot 10^{11}$	& 52959934649	& 29103662856	& 19832681021	& 12052003780	& 10067454468\\
\hline
$5\cdot 10^{11}$	& 66223739128	& 36382211005	& 24791661364	& 15065606774	& 12584840703\\
\hline
$6\cdot 10^{11}$	& 79491008890	& 43661126382	& 29750874514	& 18079320114	& 15102137499\\
\hline
$7\cdot 10^{11}$	& 92761033879	& 50940277442	& 34710302571	& 21092999797	& 17619561852\\
\hline
$8\cdot 10^{11}$	& 106033521908	& 58219944093	& 39669843978	& 24106720004	& 20137035912\\
\hline
$9\cdot 10^{11}$	& 119308020675	& 65499671827	& 44629193028	& 27120432707	& 22654498276\\
\hline
$10^{12}$	& 132584350621	& 72779583545	& 49588756987	& 30134192653	& 25171929972\\
\hline
$2^{40}$	& 145797270882	& 80023955398	& 54524158518	& 33133247297	& 27677104824\\
\hline
\end{tabular}
\end{table}

\begin{table}[htb]
\centering
\caption{Counts of class groups with $l$-rank = $r$}
\label{tab:CL_plr}
\begin{tabular}{| r || r | r | r || H H r | r | r |}
\hline
 & 
\multicolumn{3}{c||}{$r=2$} & 
\multicolumn{5}{c|}{$r=3$} \\

\multicolumn{1}{|c||}{$x$} & 
\multicolumn{1}{c|}{$l=3$} &
\multicolumn{1}{c|}{$l=5$} &
\multicolumn{1}{c||}{$l=7$} &
\multicolumn{3}{c|}{$l=3$} &
\multicolumn{1}{c|}{$l=5$} & 
\multicolumn{1}{c|}{$l=7$} \\
\hline
\hline
$10^{11}$	& 554992183	& 61905528	& 14909598	& 2278429	& 1146493	& 1891327	& 19701	& 824\\
\hline
$2\cdot 10^{11}$	& 1119549000	& 124086783	& 29864434	& 4566277	& 2298416	& 3941440	& 39455	& 1699\\
\hline
$3\cdot 10^{11}$	& 1686937952	& 186346310	& 44837690	& 6858856	& 3452213	& 6041677	& 59455	& 2555\\
\hline
$4\cdot 10^{11}$	& 2256067209	& 248638170	& 59813385	& 9153290	& 4609915	& 8170672	& 79056	& 3392\\
\hline
$5\cdot 10^{11}$	& 2826419025	& 310963856	& 74791724	& 11448254	& 5763011	& 10319592	& 99020	& 4302\\
\hline
$6\cdot 10^{11}$	& 3397716149	& 373303706	& 89772515	& 13746350	& 6918844	& 12486498	& 119058	& 5158\\
\hline
$7\cdot 10^{11}$	& 3969781768	& 435637308	& 104762170	& 16040965	& 8073267	& 14667860	& 138969	& 6050\\
\hline
$8\cdot 10^{11}$	& 4542454057	& 498010970	& 119754407	& 18334765	& 9229892	& 16861780	& 158992	& 6949\\
\hline
$9\cdot 10^{11}$	& 5115675246	& 560398913	& 134735076	& 20629307	& 10383578	& 19064061	& 179000	& 7804\\
\hline
$10^{12}$	& 5689326792	& 622806579	& 149727575	& 22924492	& 11538014	& 21274374	& 199005	& 8677\\
\hline
$2^{40}$	& 6260628955	& 684906543	& 164647966	& 25208614	& 12687370	& 23481723	& 218977	& 9586\\
\hline
\end{tabular}
\end{table}

\subsection{First Occurrences of Non-cyclic $p$-Sylow Subgroups}
During our computations, we also looked at the problem of finding
$Cl_\Delta$ with the smallest $|\Delta|$ which corresponds to a
certain $p$-group structure. This question was explored by Buell in
\cite{buell}, where he tabulated the first occurrences of what he
called ``exotic'' groups. He gave a list of first even and odd
$\Delta$, as well as the total number of them up to $2.2\cdot
10^9$. This list was extended by Ramachandran to $2\cdot 10^{11}$
\cite{ramachandran}. In Tables \ref{tab:NC_2_p}, \ref{tab:NC_3_p} and
\ref{tab:NC_4_p} we further extend Ramachandran's results by listing
first occurrences of class groups that are not present in Tables~5.13,
5.15 and 5.17 of \cite{ramachandran}.  Previously unknown minimal
discriminants whose class groups have a variety of exotic structures
were discovered, including $\Delta = -824746962451$ which is the
smallest discriminant in absolute value with $17$-rank equal to three.

We also looked at the first occurrences of doubly and trebly
non-cyclic class groups. One of the most interesting discoveries is
$\Delta = -658234953151$ with $Cl_\Delta^* \cong C(5\cdot 7\cdot 17)
\times C(5 \cdot 7 \cdot 17)$, where $C(x)$ denotes the cyclic group
of order $x$. In Tables \ref{tab:NC_mult_2} and \ref{tab:NC_mult_3},
we list first occurrences of doubly and trebly non-cyclic $p$-groups
that are not present in Tables 5.18 and 5.19 of \cite{ramachandran}.

The complete tables with all frequency counts for discriminants
satisfying \mbox{$|\Delta| < 2^{40}$} can be found in \cite{mosunov1}.
The data is soon to appear online on The $L$-functions and Modular
Forms Database \cite{lmfdb}.

\begin{longtable}[b]{| c | c | c || r | r | r | r |}
\caption{Non-cyclic rank 2 $p$-Sylow subgroups\label{tab:NC_2_p}}\\

\hline
$p$ & $e_1$	& $e_2$	& \multicolumn{1}{c|}{First even $|\Delta|$}		& \multicolumn{1}{c|}{\# even $\Delta$}	& \multicolumn{1}{c|}{First odd $|\Delta|$}	& \multicolumn{1}{c|}{\# odd $\Delta$}\\
\hline
\hline
\endfirsthead

\multicolumn{7}{c}{\normalsize{{\tablename~\thetable{}} -- continued from previous page}} \\
\hline
$p$ & $e_1$	& $e_2$	& \multicolumn{1}{c|}{First even $|\Delta|$}		& \multicolumn{1}{c|}{\# even $\Delta$}	& \multicolumn{1}{c|}{First odd $|\Delta|$}	& \multicolumn{1}{c|}{\# odd $\Delta$}\\
\hline
\hline
\endhead

\hline
\endfoot

\hline
\endlastfoot

3	& 7	& 5	& *	& *	& 253237383431	& 2\\
\hline
3	& 8	& 4	& *	& *	& 225796561799	& 10\\
\hline
3	& 10	& 2	& 1018482429656	& 2	& 65798421911	& 908\\
\hline
3	& 10	& 3	& *	& *	& 766483839959	& 2\\
\hline
3	& 11	& 1	& 786365476244	& 16	& 52623967679	& 7879\\
\hline
3	& 11	& 2	& *	& *	& 677250946319	& 24\\
\hline
3	& 12	& 1	& *	& *	& 512068796879	& 177\\
\hline
5	& 5	& 3	& *	& *	& 213265691687	& 15\\
\hline
5	& 6	& 2	& 775319038196	& 5	& 75913193999	& 175\\
\hline
5	& 7	& 1	& 573881434136	& 107	& 48662190359	& 4626\\
\hline
5	& 8	& 1	& *	& *	& 941197327199	& 3\\
\hline
7	& 3	& 3	& 798957687128	& 2	& 40111506371	& 10\\
\hline
7	& 5	& 2	& *	& *	& 336699684383	& 5\\
\hline
11	& 3	& 2	& 344379903284	& 5	& 91355041631	& 29\\
\hline
11	& 5	& 1	& *	& *	& 935094698711	& 2\\
\hline
13	& 3	& 2	& *	& *	& 366445322799	& 2\\
\hline
13	& 4	& 1	& 604812537944	& 15	& 55385334839	& 522\\
\hline
17	& 2	& 2	& 522715590248	& 3	& 94733724779	& 12\\
\hline
17	& 4	& 1	& *	& *	& 607531396391	& 7\\
\hline
23	& 3	& 1	& 428918887976	& 12	& 74447537447	& 296\\
\hline
29	& 3	& 1	& *	& *	& 323459074199	& 19\\
\hline
31	& 3	& 1	& *	& *	& 503905534439	& 14\\
\hline
53	& 2	& 1	& 313806056276	& 24	& 34862413351	& 200\\
\hline
59	& 2	& 1	& 278155567784	& 6	& 65887828631	& 81\\
\hline
61	& 2	& 1	& 388888967156	& 6	& 148712371111	& 62\\
\hline
67	& 2	& 1	& 323124297044	& 3	& 131240605511	& 28\\
\hline
73	& 2	& 1	& *	& *	& 350771311831	& 17\\
\hline
83	& 2	& 1	& *	& *	& 589364144599	& 3\\
\hline
89	& 2	& 1	& *	& *	& 619130566127	& 2\\
\hline
97	& 2	& 1	& *	& *	& 438994809599	& 2\\
\hline
101	& 2	& 1	& *	& *	& 981198752759	& 1\\
\hline
223	& 1	& 1	& 229260698804	& 17	& 36799898071	& 49\\
\hline
227	& 1	& 1	& 248738329160	& 17	& 129251563279	& 43\\
\hline
241	& 1	& 1	& 275897077784	& 13	& 74882513855	& 33\\
\hline
251	& 1	& 1	& 274131019432	& 7	& 78181110431	& 24\\
\hline
263	& 1	& 1	& 482147329592	& 7	& 37893813311	& 31\\
\hline
269	& 1	& 1	& 241103392196	& 4	& 11129396567	& 22\\
\hline
271	& 1	& 1	& 291445797352	& 5	& 171753801031	& 18\\
\hline
277	& 1	& 1	& 266610558308	& 6	& 128621435167	& 18\\
\hline
281	& 1	& 1	& 644634989492	& 2	& 266379885935	& 13\\
\hline
293	& 1	& 1	& 874615243688	& 3	& 158602460567	& 17\\
\hline
307	& 1	& 1	& 749662659128	& 3	& 149654057447	& 13\\
\hline
311	& 1	& 1	& 666221368184	& 4	& 111301462879	& 8\\
\hline
313	& 1	& 1	& 416363928728	& 3	& 303265490831	& 14\\
\hline
331	& 1	& 1	& 158739065384	& 3	& 388995885319	& 7\\
\hline
337	& 1	& 1	& 506841655124	& 2	& 283026340679	& 6\\
\hline
349	& 1	& 1	& 804641768168	& 1	& 32819826815	& 5\\
\hline
353	& 1	& 1	& 839537284648	& 2	& 305328598259	& 9\\
\hline
359	& 1	& 1	& *	& *	& 627072510479	& 4\\
\hline
373	& 1	& 1	& *	& *	& 215425181891	& 5\\
\hline
379	& 1	& 1	& *	& *	& 356510006687	& 4\\
\hline
383	& 1	& 1	& 878382375224	& 1	& 137740312007	& 6\\
\hline
397	& 1	& 1	& *	& *	& 434530437127	& 2\\
\hline
409	& 1	& 1	& *	& *	& 594857692087	& 1\\
\hline
421	& 1	& 1	& *	& *	& 422660888879	& 4\\
\hline
431	& 1	& 1	& *	& *	& 567134500223	& 3\\
\hline
433	& 1	& 1	& *	& *	& 791181108079	& 2\\
\hline
439	& 1	& 1	& *	& *	& 782761871063	& 2\\
\hline
443	& 1	& 1	& 462953812184	& 2	& 146805555551	& 2\\
\hline
449	& 1	& 1	& *	& *	& 347760731679	& 3\\
\hline
457	& 1	& 1	& 212262246356	& 1	& 413877350951	& 2\\
\hline
461	& 1	& 1	& *	& *	& 353455619411	& 4\\
\hline
463	& 1	& 1	& 1047876328724	& 1	& 679010903567	& 1\\
\hline
467	& 1	& 1	& 683325795752	& 1	& 817093587359	& 1\\
\hline
503	& 1	& 1	& *	& *	& 544027580079	& 1\\
\hline
521	& 1	& 1	& 969683875304	& 1	& 363850136623	& 1\\
\hline
577	& 1	& 1	& *	& *	& 733117084823	& 1\\
\hline
719	& 1	& 1	& *	& *	& 737463696271	& 1\\
\end{longtable}

\begin{longtable}[c]{| c | c | c | c || r | r | r | r |}
\caption{Non-cyclic rank 3 $p$-Sylow subgroups\label{tab:NC_3_p}}\\

\hline
$p$ & $e_1$	& $e_2$	& $e_3$	& \multicolumn{1}{c|}{First even $|\Delta|$}	& \multicolumn{1}{c|}{\# even $\Delta$}	& \multicolumn{1}{c|}{First odd $|\Delta|$}	& \multicolumn{1}{c|}{\# odd $\Delta$}\\
\hline
\hline
\endfirsthead

\multicolumn{8}{c}{\normalsize{{\tablename~\thetable{}} -- continued from previous page}} \\
\hline
$p$ & $e_1$	& $e_2$	& $e_3$	& \multicolumn{1}{c|}{First even $|\Delta|$}	& \multicolumn{1}{c|}{\# even $\Delta$}	& \multicolumn{1}{c|}{First odd $|\Delta|$}	& \multicolumn{1}{c|}{\# odd $\Delta$}\\
\hline
\hline
\endhead

\hline
\endfoot

\hline
\endlastfoot

3	& 3	& 3	& 2	& 341946799364	& 2	& 20687610651	& 11\\
\hline
3	& 4	& 3	& 2	& 295863285976	& 3	& 744853350587	& 1\\
\hline
3	& 5	& 2	& 2	& 412703787940	& 9	& 45248632247	& 24\\
\hline
3	& 5	& 4	& 1	& 186447381556	& 3	& 376544421947	& 5\\
\hline
3	& 6	& 2	& 2	& 582608055992	& 1	& 9483757583	& 9\\
\hline
3	& 6	& 4	& 1	& 900453600692	& 1	& 276331426207	& 2\\
\hline
3	& 7	& 2	& 2	& *	& *	& 484468933679	& 1\\
\hline
3	& 7	& 3	& 1	& 1076743681124	& 1	& 338926563823	& 10\\
\hline
3	& 8	& 2	& 1	& 276573602516	& 12	& 59714529551	& 139\\
\hline
3	& 9	& 1	& 1	& 182514096404	& 127	& 12792023879	& 978\\
\hline
3	& 9	& 2	& 1	& *	& *	& 581116399159	& 14\\
\hline
3	& 10	& 1	& 1	& 989021051864	& 1	& 146114436719	& 104\\
\hline
3	& 11	& 1	& 1	& *	& *	& 797107037711	& 1\\
\hline
5	& 4	& 2	& 1	& 204195796664	& 3	& 116279191211	& 7\\
\hline
5	& 6	& 1	& 1	& *	& *	& 349008665407	& 5\\
\hline
7	& 2	& 2	& 1	& 439240920004	& 1	& 868770849819	& 3\\
\hline
7	& 4	& 1	& 1	& 356820088964	& 1	& 451900165735	& 4\\
\hline
11	& 2	& 1	& 1	& 889484965924	& 2	& 145931588651	& 9\\
\hline
13	& 1	& 1	& 1	& 218639119912	& 11	& 38630907167	& 20\\
\hline
17	& 1	& 1	& 1	& *	& *	& 824746962451	& 2\\
\end{longtable}

\begin{longtable}[c]{| c | c | c | c | c || r | r | r | r |}
\caption{Non-cyclic rank 4 $p$-Sylow subgroups\label{tab:NC_4_p}}\\

\hline
$p$ & $e_1$	& $e_2$	& $e_3$	& $e_4$	& \multicolumn{1}{c|}{First even $|\Delta|$}		& \multicolumn{1}{c|}{\# even $\Delta$}	& \multicolumn{1}{c|}{First odd $|\Delta|$}	& \multicolumn{1}{c|}{\# odd $\Delta$}\\
\hline
\hline
\endfirsthead

\multicolumn{9}{c}{\normalsize{{\tablename~\thetable{}} -- continued from previous page}} \\
\hline
$p$ & $e_1$	& $e_2$	& $e_3$	& $e_4$	& \multicolumn{1}{c|}{First even $|\Delta|$}		& \multicolumn{1}{c|}{\# even $\Delta$}	& \multicolumn{1}{c|}{First odd $|\Delta|$}	& \multicolumn{1}{c|}{\# odd $\Delta$}\\
\hline
\hline
\endhead

\hline
\endfoot

\hline
\endlastfoot

3	& 3	& 3	& 1	& 1	& *	& *	& 1074734433547	& 1\\
\hline
3	& 4	& 2	& 1	& 1	& 426126877012	& 3	& 128589208863	& 8\\
\hline
3	& 5	& 2	& 1	& 1	& *	& *	& 473827747963	& 2\\
\hline
3	& 6	& 1	& 1	& 1	& 460093393912	& 8	& 76951070303	& 15\\
\hline
3	& 7	& 1	& 1	& 1	& 1047320556596	& 1	& 513092626699	& 2\\
\hline
3	& 8	& 1	& 1	& 1	& *	& *	& 226138531999	& 2\\
\end{longtable}

\begin{longtable}[c]{| c | c || r | r | r | r |}
\caption{Doubly non-cyclic class groups\label{tab:NC_mult_2}}\\

\hline
$p_1$	& $p_2$	& \multicolumn{1}{c|}{First even $|\Delta|$}	& \multicolumn{1}{c|}{\# even $\Delta$}	& \multicolumn{1}{c|}{First odd $|\Delta|$}	& \multicolumn{1}{c|}{\# odd $\Delta$}\\
\hline
\hline
\endfirsthead

\multicolumn{6}{c}{\normalsize{{\tablename~\thetable{}} -- continued from previous page}} \\
\hline
$p_1$	& $p_2$	& \multicolumn{1}{c|}{First even $|\Delta|$}	& \multicolumn{1}{c|}{\# even $\Delta$}	& \multicolumn{1}{c|}{First odd $|\Delta|$}	& \multicolumn{1}{c|}{\# odd $\Delta$}\\
\hline
\hline
\endhead

\hline
\endfoot

\hline
\endlastfoot

3	& 83	& {411040250696}	& 8	& 50476998239	& 69\\
\hline
3	& 89	& {271776528392}	& 14	& 146604777199	& 29\\
\hline
3	& 97	& {373716927704}	& 5	& 43344787079	& 22\\
\hline
3	& 101	& {204919229864}	& 3	& {270845549231}	& 12\\
\hline
3	& 103	& {374301791476}	& 8	& 93069031703	& 14\\
\hline
3	& 107	& {747657517988}	& 2	& 193384461719	& 15\\
\hline
3	& 109	& {379370724596}	& 5	& 35029686023	& 17\\
\hline
3	& 127	& {761263140536}	& 1	& 127466536019	& 10\\
\hline
3	& 131	& *	& *	& {248486020319}	& 2\\
\hline
3	& 137	& *	& *	& {373309196719}	& 4\\
\hline
3	& 139	& *	& *	& {261265037799}	& 3\\
\hline
3	& 149	& *	& *	& {555574557467}	& 4\\
\hline
3	& 157	& *	& *	& {258504106919}	& 2\\
\hline
3	& 163	& *	& *	& {288700332223}	& 5\\
\hline
3	& 191	& *	& *	& {778133573263}	& 1\\
\hline
3	& 193	& *	& *	& 4 {15837893871}	& 1\\
\hline
3	& 197	& *	& *	&  {675588676571}	& 1\\
\hline
3	& 223	& *	& *	&  {1044678632711}	& 1\\
\hline
5	& 47	&  {337410526616}	& 16	& 8182208159	& 78\\
\hline
5	& 53	&  {375201391636}	& 8	& 22759605719	& 28\\
\hline
5	& 59	&  {842452697976}	& 2	& 166413410411	& 20\\
\hline
5	& 61	&  {621148062232}	& 2	& 198540663599	& 14\\
\hline
5	& 67	&  {952877473160}	& 1	& { 202658297511}	& 13\\
\hline
5	& 79	& *	& *	&  {695299489415}	& 4\\
\hline
5	& 83	& *	& *	&  {255558978287}	& 5\\
\hline
5	& 97	& *	& *	&  {957408127639}	& 1\\
\hline
5	& 107	& *	& *	&  {895542638663}	& 1\\
\hline
7	& 37	&  {220308406520}	& 5	& 49918973471	& 36\\
\hline
7	& 43	&  {395768104936}	& 1	& 57006644887	& 18\\
\hline
7	& 47	&  {611628524996}	& 2	& 98533572251	& 12\\
\hline
7	& 53	&  {819974042456}	& 1	&  {532593252151}	& 6\\
\hline
7	& 59	& *	& *	&  {746029216663}	& 3\\
\hline
7	& 61	& *	& *	&  {530458082031}	& 2\\
\hline
7	& 79	& *	& *	&  {1010896284767}	& 1\\
\hline
7	& 101	& *	& *	&  {613532171711}	& 1\\
\hline
11	& 19	&  {293745669956}	& 33	& 19439678123	& 86\\
\hline
11	& 23	&  {440245788692}	& 7	& 94266055451	& 45\\
\hline
11	& 29	&  {258828614756}	& 2	&  {246806029679}	& 13\\
\hline
11	& 31	&  {752299766228}	& 1	& 167546860535	& 6\\
\hline
11	& 37	& *	& *	&  {507297592171}	& 1\\
\hline
13	& 23	&  {886308340568}	& 1	&  {303087341987}	& 15\\
\hline
13	& 31	&  {1042065325544}	& 1	&  {309693265351}	& 5\\
\hline
13	& 37	& *	& *	&  {583833769207}	& 1\\
\hline
13	& 41	&  {969016080404}	& 1	&  {407911409771}	& 2\\
\hline
17	& 19	& 150334566104	& 2	&  {473841789911}	& 9\\
\hline
17	& 23	&  {432363302164}	& 2	& 54134972891	& 3\\
\hline
17	& 29	& *	& *	&  {892052200651}	& 2\\
\hline
17	& 31	& *	& *	&  {1035367542059}	& 1\\
\hline
19	& 23	& *	& *	&  {659380117199}	& 1\\
\hline
19	& 29	& *	& *	&  {915336787039}	& 1\\
\end{longtable}

\begin{longtable}[c]{| c | c | c || r | r | r | r |}
\caption{Trebly non-cyclic class groups\label{tab:NC_mult_3}}\\

\hline
$p_1$	& $p_2$	& $p_3$	& \multicolumn{1}{c|}{First even $|\Delta|$}	& \multicolumn{1}{c|}{\# even $\Delta$}	& \multicolumn{1}{c|}{First odd $|\Delta|$}	& \multicolumn{1}{c|}{\# odd $\Delta$}\\
\hline
\hline
\endfirsthead

\multicolumn{7}{c}{\normalsize{{\tablename~\thetable{}} -- continued from previous page}} \\
\hline
$p_1$	& $p_2$	& $p_3$	& \multicolumn{1}{c|}{First even $|\Delta|$}	& \multicolumn{1}{c|}{\# even $\Delta$}	& \multicolumn{1}{c|}{First odd $|\Delta|$}	& \multicolumn{1}{c|}{\# odd $\Delta$}\\
\hline
\hline
\endhead

\hline
\endfoot

\hline
\endlastfoot

3	& 5	& 17	& {278849168408}	& 19	& 60235736039	& 63\\
\hline
3	& 5	& 23	& {703386940456}	& 3	& 148439200263	& 14\\
\hline
3	& 5	& 29	& *	& *	& {300193517399}	& 5\\
\hline
3	& 5	& 31	& *	& *	& {323714678543}	& 5\\
\hline
3	& 5	& 37	& *	& *	& {999098015071}	& 1\\
\hline
3	& 7	& 23	& *	& *	& {805192394183}	& 1\\
\hline
5	& 7	& 11	& *	& *	& {656450533751}	& 6\\
\hline
5	& 7	& 13	& {786460186856}	& 1	& 110671542299	& 3\\
\hline
5	& 7	& 17	& *	& *	& {658234953151}	& 1\\
\end{longtable}

\subsection{Euler's Conjecture on Idoneal Numbers}
Consider a discriminant $\Delta$ such that $Cl_\Delta$ is isomorphic
to $(\mathbb Z/2\mathbb Z)^l$ for some $l > 0$. All such $\Delta$ are
related to so-called \emph{idoneal} numbers, which were studied by
Euler and Gau\ss\,(see the extensive survey on idoneal numbers by Kani
\cite{kani}). A positive number $D$ is idoneal if every integer $n$,
which is uniquely representable in the form $n = x^2 \pm Dy^2$ with
$\gcd(x^2,Dy^2) = 1$, is either a prime, or a prime power, or twice
one of these.  Both Euler and Gau\ss\, tabulated idoneal numbers, and
conjectured that the largest of them does not exceed 1848
\cite[\textsection 303]{gauss}. From the class group perspective, it
means that $\Delta = -5460$ is the largest fundamental discriminant
such that $Cl_\Delta \cong (\mathbb Z/2\mathbb Z)^l$. In 1918, the
hypothesis of Euler and Gau\ss\, was confirmed by Hecke and Landau
under the assumption of the ERH \cite{landau}. However,
unconditionally this problem still remains open, though Weinberger was
able to prove that there exists \emph{at most} one idoneal number
exceeding 1848 \cite{weinberger}. In our computations, we confirm that
up to $2^{40}$ the largest in its absolute value fundamental
discriminant $\Delta$ with $Cl_\Delta \cong (\mathbb Z/2\mathbb Z)^l$
is $\Delta = -5460$. This result agrees with findings of Euler and
Gau\ss. Note that there exists one non-fundamental discriminant, namely
$\Delta = -7392$, which is larger than $-5460$ in its absolute value and
has the group structure as above.

\section{Further Work} \label{sec:furtherwork}

Our novel approach to class group tabulation has enabled us to extend
the feasibility limit.  Pushing our methods further would probably
require a class number tabulation mechanism for $\Delta \equiv 1
\pmod{8}$. Presently, no efficient class number tabulation formulas
are known for this congruence class. One formula that might be of
interest for future exploration is due to Humbert \cite[Section
  6]{watson}, who discovered that
\begin{equation} \label{eq:humbert}
\sum_{n = 0}^\infty F(8n+7)q^n = S^{-1}(q)\sum_{n=1}^\infty(-1)^{n+1}n^2\frac{q^{\frac{n(n+1)}{2}}-1}{1+q^n},
\end{equation} 
where
$$
S(q) = \sum_{n = 0}^\infty(-1)^n(2n+1)q^{\frac{n(n+1)}{2}} = 1 - 3q + 5q^3 - 7q^6 + 9q^{10} - \ldots\,\,\,.
$$
Despite its look, the large series on the right hand side of
(\ref{eq:humbert}) does not take long to initialize, as it is simple
to derive the formula for its $n$-th coefficient. The main problem
lies in the computation of $S^{-1}(q)$, which can take a significant
amount of time, especially if $S(q)$ is of a high degree. Also, the
coefficients of $S(q)$ grow very fast. For example, its 16-th
coefficient has bit size 10, 64-th --- bit size 59, and 65536-th fits
into 1135 bits. These coefficients have to be somehow truncated, for
example, by reducing them modulo some prime $p$, such that $F(8n+7) <
p$ for any $n < (N-7)/8$. However, this approach also brings certain
difficulties, as it significantly increases the bit size parameter
$s$. Despite all the obstacles, the usage of the formula
(\ref{eq:humbert}) might still be faster than the conditional
computation of $\Delta \equiv 1 \pmod{8}$ followed by the verification
procedure. We tried to use this approach, and in fact
our implementation includes a subroutine \texttt{invert} for
out-of-core polynomial inversion \cite{mosunov2}. This subroutine
utilizes a Newton iteration algorithm \cite[Algorithm 9.3]{gathen},
which performs the inversion of an arbitrary polynomial to degree $2^n
- 1$ by sequentially computing its inverse to degrees $3, 7, \ldots,
2^k - 1, \ldots, 2^n - 1$ for $2 \leq k \leq n$. Each iteration in
this algorithm requires one squaring of a polynomial and one
polynomial multiplication. Unfortunately, we were unable to produce
class numbers using this method due to the number of difficulties
previously mentioned.

We also believe that the class group computation can get accelerated
by using Sutherland's $p$-group discrete logarithm algorithms
\cite{sutherland}. The idea is simple: when the class number
$h(\Delta)$ is known, instead of computing all of the potentially
non-cyclic subgroups of $Cl_\Delta$ we compute the structure of each
potentially non-cyclic $p$-group separately. Sutherland's algorithms
may be especially useful when resolving the structure of a 2-group, as
we can precompute its rank by factoring $\Delta$. In some cases, the
2-rank allows us to terminate the 2-group resolution earlier by
ignoring some of its generators of order 2.

Finally, we note that the question of unconditional tabulation of
class groups with positive $\Delta$ is still left open, and is
currently work in progress.

\bibliographystyle{amsplain}

\providecommand{\bysame}{\leavevmode\hbox to3em{\hrulefill}\thinspace}
\providecommand{\MR}{\relax\ifhmode\unskip\space\fi MR }
\providecommand{\MRhref}[2]{%
  \href{http://www.ams.org/mathscinet-getitem?mr=#1}{#2}
}
\providecommand{\href}[2]{#2}

\end{document}